\theoremstyle{plain}
\newtheorem{theorem}{Theorem}
\newtheorem{observation}[theorem]{Observation}
\newtheorem{lemma}[theorem]{Lemma}
\newtheorem{proposition}[theorem]{Proposition}
\theoremstyle{definition}
\newtheorem{definition}[theorem]{Definition}
\newcommand{\set}[1]{\left\{#1\right\}}
\newcommand{\abs}[1]{\left|#1\right|}
\newcommand{\vx}[2]{\cnode*(#1){1.2pt}{#2}}
\newcommand{\edg}[2]{\ncline{#1}{#2}}
\newdimen\omsq  \omsq=10pt
\newdimen\omrule    \omrule=1pt
\newdimen\omint
\newif\ifvth    \newif\ifhth    \newif\ifomblank
\def\OMINO#1{%
    \vthtrue \hthtrue
    \vbox{ \offinterlineskip\parindent=0pt \OM#1\relax\vskip1pt}}
\def\OM#1{%
    \omint=\omsq    \advance\omint-\omrule
    \ifx\relax#1%
    \else
      \ifx\\#1 \newline\null \hthtrue \ifvth\vthfalse\else\vskip-\omrule\vthtrue\fi
      \else%
        \ifx .#1\hskip\ifhth \omrule\else \omint\fi
        \else%
          \ifx +#1\def\colour{black}\fi%
          \ifx -#1\def\colour{black}\fi%
          \ifx |#1\def\colour{black}\fi%
          \ifx 2#1\def\colour{lightgray}\fi%
          \ifx 3#1\def\colour{gray}\fi%
          \ifx 4#1\def\colour{darkgray}\fi%
          \ifx @#1\def\colour{black}\fi%
          \ifx r#1\def\colour{red}\fi%
          \ifx g#1\def\colour{green}\fi%
          \ifx l#1\def\colour{lime}\fi%
          \ifx o#1\def\colour{olive}\fi%
          \ifx O#1\def\colour{orange}\fi%
          \ifx b#1\def\colour{blue}\fi%
          \ifx t#1\def\colour{teal}\fi%
          \ifx y#1\def\colour{yellow}\fi%
          \ifx m#1\def\colour{magenta}\fi%
          \ifx c#1\def\colour{cyan}\fi%
          \ifx p#1\def\colour{pink}\fi%
          \ifx P#1\def\colour{purple}\fi%
          \ifx w#1\def\colour{white}\fi%
          \textcolor{\colour}{\rule{\ifhth\omrule\else\omsq\fi}{\ifvth\omrule\else\omsq\fi}}%
          \ifhth\else\hskip -\omrule\fi%
        \fi%
        \ifhth\hthfalse\else\hthtrue\fi%
      \fi%
    \expandafter\OM%
    \fi}
\newcommand{\ep}{\varepsilon}
\title{Subexponential mixing for partition chains on grid-like graphs}
\author{Alan Frieze\thanks{Research Supported in part by NSF grant DMS1952285. email: frieze@cmu.edu}}
\author{Wesley Pegden\thanks{Research Supported in part by NSF grant DMS1700365. email: wes@math.cmu.edu}}
\affil{Department of Mathematical Sciences, Carnegie Mellon University}
\newcommand{\cI}{\mathcal{I}}
\newcommand{\cX}{\mathcal{X}}
\newcommand{\R}{\mathbb{R}}
\newcommand{\cM}{\mathcal{M}}
\newcommand{\cG}{\mathcal{G}}
\newcommand{\cE}{\mathcal{E}}
\newcommand{\cP}{\mathcal{P}}
\newcommand{\band}{\mathrm{band}}
\newcommand{\poly}{\mathrm{poly}}
\begin{document}
\maketitle
\begin{abstract}
We consider the problem of generating uniformly random partitions of the vertex set of a graph such that every piece induces a connected subgraph.  For the case where we want to have partitions with linearly many pieces of bounded size, we obtain approximate sampling algorithms based on Glauber dynamics which are fixed-parameter tractable with respect to the bandwidth of $G$, with simple-exponential dependence on the bandwidth.  For example, for rectangles of constant or logarithmic width this gives polynomial-time sampling algorithms.  More generally, this gives sub-exponential algorithms for bounded-degree graphs without large expander subgraphs (for example, we obtain $O(2^{\sqrt n})$ time algorithms for square grids).

In the case where we instead want partitions with a small number of pieces of linear size, we show that Glauber dynamics can have exponential mixing time, even just for the case of 2 pieces, and even for 2-connected subgraphs of the grid with bounded bandwidth.
\end{abstract}

\section{Introduction}
In this paper we consider the mixing time of Glauber dyanmics for uniform sampling of partitions of the vertex-set of a bounded-degree graph into $q$ subsets which each induce connected subgraphs of $G$.  Our positive results in this paper focus on the case where $q$ is linear in the number of vertices of $G$ and each subset in the partition is constrained to have bounded size $\leq B$; in particular, this generalizes the setting of the well-studied monomer-dimer model \cite{danamonomer,jerrummonomer} where $B=2$.

The problem of randomly partitioning a graph into connected pieces arises naturally in the context of evaluating political districtings, where Markov chain methods have been applied and developed over the past decade.  (For this application, our positive results correspond to the case where there are many small districts; in the U.S. context, one might imagine the case of the state legislative districting of a state, rather than its districting into US Congressional districts.)  While there are rigorous statistical approaches based on Markov chains which can avoid the sampling problem when the only goal is outlier detection \cite{outliers,twopaths}, the benefits of sampling have led to the development of a large range of Markov chains intended to generate good samples from partition spaces \cite{recomb,kosuke2020,herschlag2020quantifying,graves2017,duke2019mergesplit,duke2020multi,duke2021metropolized}.  The mixing properties of these Markov chains, however, have only been supported by heuristic evidence, and there is still a dearth of results allowing rigorous approximate sampling from specified target distributions on such partitions.  

\bigskip

The mixing times we prove will depend on the \emph{bandwidth} of $G$, which is the minimum over labelings $\sigma$ of the vertices of $G$ by distinct numbers $1,\dots,n$ of the maximum difference $\sigma(u)-\sigma(v)$ of any adjacent pair $u\sim v$ of vertices:
\begin{equation}
\band(G)=\min_{\sigma} \max_{u\sim v} |\sigma(u)-\sigma(v)|.
\end{equation}
(Throughout, we will use $\sigma$ to denote a labeling realizing this minimum.) In particular, when $G$ is a $k\times \ell$ grid with $n=k\ell$ vertices we have $\band(G)=\min(k,\ell)$ \cite{rectanglebandwidth}, and will obtain running times of order $\poly(n)2^{O(\min(k,\ell))}$.

\bigskip
In this paper, a contiguous $q$-partition of $G$ is a partition of the vertex set into $q$ (not necessarily nonempty) pieces, each inducing a connected subgraph of $G$, 
and a $B$-bounded contiguous $q$-partition is a contiguous $q$-partition in each every piece has size $\leq B$.  

In general, it can be NP-hard to determine whether a given graph admits a partition into connected pieces of prescribed sizes, even for planar graphs of bounded degree \cite{planarpartition}.  But things change if we allow some slack in the size constraints.   For example, we will see that one can efficiently partition any graph of maximum degree $\Delta$ into connected pieces of sizes between $B/(\Delta-1)$ and $B$, for any $B\geq 1$.  The Hamiltonicity of grid graphs means they can be partitioned into connected pieces of any sizes summing to the total number of vertices; i.e., essentially no slack is needed at all.  Moreover, in both of these cases, the partitioning can also be done locally, without global modifications to the graph.  The following definition captures these properties:
\begin{definition}\label{def:partitionable}
A family of graphs $\cG$ is $(B,\alpha)$-\emph{partitionable} if for any $\ep>0$ and $q>(1+\ep)n/(\alpha B)$, there's a $K$ sufficiently large and a $\delta>0$ such for any sufficiently large $G\in \cG$:
\begin{itemize}
    \item There is a polynomial time algorithm to find a $B$-bounded contiguous $q$-partition of $G$, and
    \item Given any $B$-bounded contiguous-$q$-partition $\omega$ of $G$, there are $\delta|V(G)|$ distinct $B$-bounded contiguous partitions $\omega_i$ of $G$, using fewer than $q$ partition classes, which each agree with $\omega$ except on a connected subgraph $H_i$ of $G$ with at most $K$ vertices.
    \end{itemize}
\end{definition}
Essentially, the second part of Definition \ref{def:partitionable} requires that any $B$-bounded contiguous partition using sufficiently many nonempty classes can be locally modified to use fewer nonempty color classes. Note that no infinite family of graphs can be $(B,\alpha)$-partitionable for any value $\alpha>1$. On the other hand, we will prove the following result about bounded-degree graphs:
\begin{proposition}\label{p.connectedgraphs}
For any $B$, 
connected graphs of maximum degree $\Delta$ are $(B,\frac{1}{\Delta-1}\frac{B-1}{B})$-partitionable.
\end{proposition}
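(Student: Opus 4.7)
My plan is to establish both bullets of Definition~\ref{def:partitionable} using a spanning-tree argument. Fix $\Delta$, $B$, $\ep>0$ and $q>(1+\ep)n(\Delta-1)/(B-1)$, and let $\cT$ be a spanning tree of $G$ rooted at a leaf, so that every non-root vertex of $\cT$ has at most $\Delta-1$ children.

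For the first bullet, I would greedily extract pieces from $\cT$: while the remaining tree has more than $B$ vertices, descend from the root, at each step moving to the child whose subtree is largest, and stop at the first vertex $v$ whose subtree has at most $B$ vertices. Writing $p$ for the parent of $v$, the subtree of $p$ has more than $B$ vertices while $p$ has at most $\Delta-1$ children, so $v$'s subtree has size in $[\lceil B/(\Delta-1)\rceil,B]$. Remove it as a piece and iterate; this produces at most $n(\Delta-1)/B+1\leq q$ nonempty pieces for $n$ large (since $1/B<1/(B-1)$), and any residual empty classes pad the partition to exactly $q$.

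For the second bullet, I would first show by averaging that at least $\ep' n$ pieces of $\omega$ have size $\leq T_0:=\lfloor(B-1)/(\Delta-1)\rfloor$: if every piece had size $\geq T_0+1$, then $n\geq q(T_0+1)>(1+\ep)n$, a contradiction. Call these pieces \emph{small}. The goal is to produce a distinct modified partition $\omega_P$ for each small $P$, differing from $\omega$ only on a connected $H_P\subseteq V(G)$ of size at most some $K=K(B,\Delta)$, and using one fewer nonempty class. The easy case is a direct merge: if some piece-graph neighbor $P'$ of $P$ satisfies $|P|+|P'|\leq B$, reassign all of $P$ to $P'$ and take $H_P=P$.

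The principal obstacle is the case where every piece-graph neighbor of $P$ is \emph{full}, i.e.\ has size $>B-|P|\geq B-T_0$. In that case I would set $H_P$ to be the union of $P$ with a bounded-radius cluster of surrounding pieces and repartition it into one fewer piece. The key observation is that full pieces have size $\geq B-T_0=\Omega(B)$, while the global average piece size is strictly less than $T_0$; hence within any piece-graph neighborhood of bounded radius around $P$ there must be a piece with enough slack to enable a three- (or few-) piece repartition of $H_P$ that reduces the class count. Making this quantitative---bounding by $K(B,\Delta)$ the radius one must explore from a surrounded small piece before such slack appears---is the hardest step, and I expect it to go through via a charging argument against the global deficit of average piece size. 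Distinctness of the $\omega_P$ across different small $P$ is automatic since the small pieces are pairwise disjoint.
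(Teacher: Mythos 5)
Your Part~1 is correct and is essentially the paper's argument: the paper also recursively splits a spanning tree into pieces of size in $(\tfrac{B-1}{\Delta-1},B]$ via a tree-breaking lemma (its Lemma~\ref{l.treebreak}), and your greedy descent to the largest subtree achieves the same.

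For Part~2, the direct-merge case is fine, but your treatment of a small piece $P$ whose piece-graph neighbors are all full rests on a claim that is false as stated: it is not true that every bounded-radius piece-graph ball around such a $P$ must contain a piece with slack. A small piece can sit in the middle of a region, of radius growing with $n$, in which every other piece has size exactly $B$; the hypothesis $\kappa\geq(1+\ep)n(\Delta-1)/(B-1)$ only pushes down the \emph{global} average piece size, not the average in every bounded ball, and since $K$ and $\delta$ in Definition~\ref{def:partitionable} must not depend on $n$ you cannot enlarge the radius to compensate. So you must discard some small pieces, and the burden shifts to showing that at least $\delta n$ small pieces do have slack within bounded radius. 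That is precisely the missing ``charging argument,'' and it is not routine: once balls around different small pieces overlap, the same slack gets charged many times. Your distinctness claim has a related problem, since overlapping clusters $H_P$ need not produce distinct $\omega_P$.

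The paper sidesteps all of this by averaging over \emph{chunks of the piece graph} rather than over small pieces. It applies the same tree-breaking lemma, but now to the piece graph $H$ (whose maximum degree is at most $B\Delta$), cutting $H$ into connected chunks $H_1,\dots,H_s$ that each contain between $k$ and $kB\Delta+1$ pieces, with $k=\lceil 2/\ep\rceil$. Because these chunks \emph{partition} the set of pieces, a one-line averaging argument shows at least $\delta n$ chunks have local average piece size below $\tfrac{1}{1+\ep/2}\tfrac{B-1}{\Delta-1}$. Each corresponding $G(H_i)$ has boundedly many vertices (so $K$ depends only on $B,\Delta,\ep$), and applying the tree-breaking lemma inside $G(H_i)$ produces a repartition into at most $k$ pieces, strictly fewer than the more-than-$k$ pieces it held. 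Distinctness is immediate since the chunks are disjoint. This chunk decomposition is the device your sketch is missing: it localizes the global slack cleanly, without ever needing to locate small pieces or control balls around them.
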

\noindent Proposition \ref{p.connectedgraphs} cannot be improved in general, but, for example, grids satisfy a strong version of Definition \ref{def:partitionable}:
\begin{proposition}\label{p.grids}
Rectangular grids are $(B,1)$-partitionable for all $B$.
\end{proposition}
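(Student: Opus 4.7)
The first clause is immediate. A rectangular grid $G$ admits a Hamiltonian path $v_1,\ldots,v_n$, so for any $q > n/B$ I choose integer sizes $s_1,\ldots,s_q \in \{0,1,\ldots,B\}$ summing to $n$ (for instance, $s_i \in \{\lfloor n/q\rfloor, \lceil n/q\rceil\}$) and take the $i$-th part to be the corresponding interval of the Hamiltonian path. This is a $B$-bounded contiguous $q$-partition of $G$, computable in linear time.

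For the second clause, let $\omega$ be a $B$-bounded contiguous $q$-partition with $q > (1+\ep)n/B$. The total slack $\sum_P(B-|P|) = qB-n$ is at least $\ep n$, and an averaging argument produces $\Omega(\ep n/B)$ parts $P$ of size at most $B/(1+\ep/3)$; I will call these \emph{small} parts. My plan is to attach to each small part $P$ a local modification $\omega_P$ which reduces the number of parts by at least one, supported on a connected subgraph $H_P$ of $G$ of bounded size, and then check that these modifications are pairwise distinct.

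Given a small part $P$, I grow $H_P$ greedily in $G$ by absorbing neighboring parts of $\omega$ one at a time, until the total slack $\sum_{P' \in \cC_P}(B-|P'|)$ of the absorbed collection $\cC_P$ exceeds $|P|$. Since the average per-part slack is at least $\ep B/(1+\ep)$, this process terminates after absorbing $O(1/\ep)$ parts, yielding a connected subgraph $H_P$ of size at most $K = O(B/\ep)$. The condition $\sum_{P' \in \cC_P} |P'| \le (|\cC_P|-1)B$ now lets me re-partition $H_P$ into $|\cC_P|-1$ connected subgraphs of size at most $B$, which I achieve by cutting a Hamiltonian path of $H_P$ into intervals of the desired sizes. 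The resulting $\omega_P$ agrees with $\omega$ outside $H_P$ and uses strictly fewer parts.

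Because each $\omega_P$ eliminates a distinct small part $P$, the modifications are pairwise distinct, yielding $\Omega(\ep n/B) = \delta n$ distinct partitions with $\delta = \Omega(\ep/B)$ and $K = O(B/\ep)$. The main obstacle is the re-partitioning step: the subgraph $H_P$ must admit a cut into the required number of connected pieces of sizes bounded by $B$, which is not automatic for arbitrary connected subgraphs of the grid. Resolving this requires either controlling the greedy growth order so that $H_P$ is path-like (e.g., by absorbing parts in the order they appear along a snake Hamiltonian path of $G$, using the rectangular structure to ensure that the absorbed region stays close to a sub-path of $\pi$), or invoking the specific geometry of grids to exhibit a suitable partition of $H_P$ directly. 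This is the technical heart of the proof.
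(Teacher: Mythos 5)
You have correctly identified the gap, and it is fatal to the proof as written. The region $H_P$ obtained by greedily absorbing neighboring color classes is an arbitrary connected union of $O(1/\ep)$ classes in the grid, and such subgraphs need not have Hamiltonian paths (consider a plus- or T-shape whose arms have unfortunate parities). So the re-partitioning step --- ``cut a Hamiltonian path of $H_P$ into intervals of the desired sizes'' --- is not available. Falling back on the tree-splitting of Lemma~\ref{l.treebreak} only produces pieces of size at least $(B-1)/(\Delta-1) = (B-1)/3$ in the grid, which yields too many pieces: this is precisely the weaker $(B,\alpha)$-partitionability with $\alpha < 1$ established in Proposition~\ref{p.connectedgraphs}, not the $(B,1)$ claimed here. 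There is also a smaller unaddressed point: the distinctness of the $\omega_P$ is not automatic, since two small parts $P, P'$ could generate overlapping absorption regions whose re-partitionings coincide.

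The paper's proof avoids both issues by reversing the order of quantification: rather than growing a region around a chosen part of $\omega$, it fixes in advance a tiling of (nearly all of) the $k\times\ell$ grid into disjoint $L\times M$ rectangular subgrids $H_i$, with $L$ and $M$ of order $B/\ep$ chosen so that $|B(H_i)|/|V(H_i)| \leq \ep/(4B)$. Since the color classes have diameter at most $B$, a counting argument shows that at most $\ep n/(4B)$ classes meet any tile boundary, leaving $(1+\ep/4)n/B$ classes each contained in a single tile; pigeonhole then gives $\delta n$ tiles, each containing strictly more than $LM/B$ classes. Each such tile is a rectangle, hence Hamiltonian, so it can be repartitioned into exactly $LM/B$ classes of size exactly $B$, saving at least one class. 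Disjointness of the $H_i$ makes the resulting partitions pairwise distinct for free. The fixed-tiling decomposition is exactly what supplies the Hamiltonicity and the distinctness that your adaptive construction was missing.
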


\noindent Our main result is the following:
\begin{theorem}\label{t.part}If $\cG$ is a $(B,\alpha)$-partitionable family of graphs of bounded maximum degree, then for any 
\[
q\geq (1+\ep)\frac{n}{\alpha B},
\]
Glauber dynamics for the space of $B$-bounded contiguous $q$-partitions of $G$ has mixing time
\[
\leq \poly(n)2^{O(\band(G))}\log(\tfrac 1 \delta).
\]
\end{theorem}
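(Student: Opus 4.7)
The plan is to apply the canonical paths (multicommodity flow) method of Sinclair to Glauber dynamics on the state space $\Omega$ of $B$-bounded contiguous $q$-partitions of $G$. Since the stationary distribution is uniform on $\Omega$, it suffices to route, for each ordered pair $(\omega,\omega')\in\Omega^2$, a canonical path $\gamma_{\omega,\omega'}$ in the transition graph of the dynamics so that no single transition $(\sigma,\tau)$ is traversed by more than $2^{O(\band(G))}\poly(n)$ such paths; Sinclair's theorem then yields the claimed mixing time $\poly(n)\,2^{O(\band(G))}\log(1/\delta)$.

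Fix a bandwidth-optimal labeling $\sigma^\ast$ realizing $b=\band(G)$, and write $V_i=\{v:\sigma^\ast(v)\le i\}$ and $W_i=V_i\setminus V_{i-b}$. The canonical path $\gamma_{\omega,\omega'}$ will sweep $i$ from $0$ to $n$, maintaining a hybrid state that agrees with $\omega'$ on $V_{i-b}$ and with $\omega$ on $V\setminus V_i$; all reconciliation happens inside the window $W_i$ of at most $b$ vertices. The bandwidth bound ensures $V_{i-b}$ and $V\setminus V_i$ are non-adjacent, so the partition restricted to those two sets can be left untouched while we work in $W_i$. Concretely, on each window I would clear $W_i$ into a ``scratch'' empty partition class and then sequentially reassign its vertices into their $\omega'$-classes, one Glauber move at a time, with care to preserve connectivity at each step.

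The hypothesis of $(B,\alpha)$-partitionability enters precisely here to supply the empty scratch class. Since $q\ge(1+\ep)n/(\alpha B)$, Definition~\ref{def:partitionable} guarantees that from any $B$-bounded contiguous $q$-partition there are $\delta n$ disjoint local modifications, each on a connected subgraph of size at most $K$, which reduce the number of used partition classes by one. Each such modification corresponds to $O(K)=O(1)$ Glauber moves, so a bounded number of them produces a state with at least one empty class to be used as scratch for the window; at the end of the window, these preparatory moves are inverted. Overall each canonical path has length $\poly(n)$.

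The congestion bound then follows a standard encoding argument. Given a transition $(\sigma,\tau)$ lying on $\gamma_{\omega,\omega'}$ at sweep position $i$, the states $\omega$ and $\omega'$ are determined by $\sigma$ restricted to $V\setminus V_i$ and to $V_{i-b}$ respectively, together with an $O(b)$-sized witness describing the two restrictions $\omega|_{W_i}$ and $\omega'|_{W_i}$. Because each vertex has only $O(1)$ locally valid part-labels (bounded degree plus the size cap $B$), the number of possible witnesses is $2^{O(b)}$, so at most $\poly(n)\,2^{O(b)}$ pairs $(\omega,\omega')$ use each transition, which is the desired congestion bound. The main obstacle is executing the reconciliation inside each window so that every intermediate partition is simultaneously connected and $B$-bounded, most delicately when pieces of $\omega$ or $\omega'$ straddling the boundary of $W_i$ must be temporarily split and re-merged; this is exactly what the partitionability hypothesis is engineered to permit, via bounded-size local rearrangements executed against the scratch class.
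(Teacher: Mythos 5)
Your outline captures the shell of the paper's argument---canonical paths, a bandwidth-ordered sweep, and an encoding bound---but it misses the central technical obstacle the paper introduces the mountain-climbing lemma (Lemma~\ref{t.discreteint}) to solve, and without that ingredient the sweep as you describe it does not stay inside the state space.

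The issue is the number of nonempty classes at intermediate states. After reducing $\omega,\omega'$ to $\bar\omega,\bar\omega'$ each using at most $q-10t$ classes, a left-to-right hybrid at sweep position $i$ uses roughly $\sum_{j\le i}\phi_j(\bar\omega')+\sum_{j>i}\phi_j(\bar\omega)$ classes, and this can be as large as $\phi(\bar\omega)+\phi(\bar\omega')\approx 2q$. Concretely, if $\bar\omega'$ concentrates its classes on the left and $\bar\omega$ on the right (small pieces there, large pieces elsewhere), the hybrid overshoots $q$ by a linear amount. Freeing up ``a bounded number'' of scratch classes via partitionability cannot fix this, since you would need order $n$ of them, and the definition only licenses local, constant-size modifications, each freeing one class; doing linearly many would also blow up path length and congestion. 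The paper avoids this by treating the window indices as a cycle $C_m$ and invoking Lemma~\ref{t.discreteint} to choose a nonmonotone order of window-copies (a contiguous arc $I_k$ that can grow and shrink from either end) so that at every step the hybrid class count tracks the linear interpolation between $\phi(\bar\omega)$ and $\phi(\bar\omega')$ to within $O(t)$, yielding $\phi(\bar\omega_k)\le q-6t$ throughout. The same control is what lets the encoding in the congestion bound assemble the ``unknown'' halves of $\bar\omega$ and $\bar\omega'$, plus $O(t)$ singletons, into a single genuine element of $\Omega$, giving the bound $|\Omega|$ rather than something larger.

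A secondary gap: you note that the scratch-freeing moves are ``inverted,'' but you do not argue that few pairs $(\omega,\omega')$ share a given Phase~0 prefix. The paper handles this by choosing the class-reducing maps $\theta_\kappa$ via the Lov\'asz Local Lemma (Lemma~\ref{local}) so that each has preimages of size at most $\log n$, which keeps Phase~0 from contributing more than a $\poly(n)$ factor to the congestion. Without some such bound, many source states could funnel through the same reduced state, and the $2^{O(b)}\poly(n)$ congestion claim would not follow from the encoding alone.
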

We define the Glauber dyanmics for this state space in Section \ref{s.glauber}.
We can also accomodate reasonable nonuniformity in the target distribution.  For example, letting $\kappa(\omega)$ denote the number of nonempty classes in a partition $\omega$ of the vertex set of $G$, one natural case is the following:
\begin{theorem}\label{t.color}If $\cG$ is a $(B,\alpha)$-partitionable family of graphs of bounded maximum degree, then for for any 
\[
q\geq (1+\ep)\frac{n}{\alpha B},
\]
Glauber dynamics for the space of $B$-bounded contiguous $q$-partitions of $G$ has mixing time
\[
\leq \poly(n)2^{O(\band(G))}\log(\tfrac 1 \delta),
\]
where $\pi(\omega)$ for a partition $\omega$ is proportional to $\frac{q!}{(q-\kappa(\omega))!}$.
\end{theorem}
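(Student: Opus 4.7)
The plan is to extend the proof of Theorem~\ref{t.part} to the weighted setting by adapting the congestion analysis to the stationary distribution $\pi(\omega) \propto q!/(q-\kappa(\omega))!$, showing that the reweighting introduces only polynomial overhead.

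The core observation is that any Glauber update changes $\kappa(\omega)$ by at most one, so for any transition $\omega \to \omega'$ of the chain the weight ratio $\pi(\omega')/\pi(\omega)$ lies in $[1/q, q]$. Similarly, the local modifications guaranteed by the $(B,\alpha)$-partitionable property touch at most $K = O(1)$ vertices and therefore change $\kappa$ by at most $K$, changing $\pi$ by at most a polynomial factor. Since $\pi$ depends only on the scalar $\kappa(\omega)$ and is locally Lipschitz in this sense, the weights interact cleanly with the bandwidth-based decomposition underlying Theorem~\ref{t.part}.

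Concretely, I would walk through the steps in the proof of Theorem~\ref{t.part}---the congestion bounds for canonical paths along the bandwidth labeling, together with the contribution from the merging moves provided by $(B,\alpha)$-partitionability---and verify that each estimate degrades by at most a polynomial factor when the uniform distribution is replaced by $\pi$. Combined with the observation that $\log(1/\pi_{\min}) = O(n \log n) = \poly(n)$, which controls the conversion from spectral gap to mixing time, this recovers the stated bound $\poly(n)\,2^{O(\band(G))}\log(1/\delta)$.

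The main obstacle to avoid is a naive Markov chain comparison of Diaconis--Saloff-Coste type, which would incur a global loss proportional to $\max_\omega \pi(\omega)/\min_\omega \pi(\omega) = \Theta(q!) = e^{\Theta(n \log n)}$ and hence is too large to be absorbed into $\poly(n)$. This is circumvented by inserting the weights locally into the proof of Theorem~\ref{t.part}: every step of that analysis references only local modifications or bounded Hamming-distance moves, across which $\pi$ changes by at most a polynomial factor, so the weighted proof mirrors the uniform one up to polynomial overhead rather than accumulating the worst-case ratio $q!$.
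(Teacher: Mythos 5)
Your approach is genuinely different from the paper's, and it contains a gap. You correctly identify that a naive Diaconis--Saloff-Coste comparison would lose a factor of $q! = e^{\Theta(n\log n)}$, and you propose instead to insert the weights $\pi(\omega)\propto q!/(q-\kappa(\omega))!$ locally into the congestion analysis for Theorem~\ref{t.part}. The gap is in the claim that ``every step of that analysis references only local modifications or bounded Hamming-distance moves.'' The congestion bound controls sums of the form $\sum_{\Gamma_{\omega,\omega'}\ni e}\pi(\omega)\pi(\omega')/\pi(\omega_1)$, where $\omega,\omega'$ are the endpoints of a canonical path and $\omega_1$ lies along it; these states are at unbounded Hamming distance from one another, so the weight ratio is not controlled by the local-move observation. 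To carry the weights through, you would need the injective encoding map $(\omega,\omega')\mapsto(\text{auxiliary data},\omega_{\mathrm{enc}})$ at the heart of the congestion bound to satisfy $\pi(\omega)\pi(\omega')\leq \poly(n)\,\pi(\omega_1)\pi(\omega_{\mathrm{enc}})$. The natural estimate here — that $\kappa$ is additive across the encoding up to an $O(t)$ error with $t=2B\cdot\band(G)$ — yields a factorial ratio on the order of $q^{O(t)} = n^{O(B\cdot\band(G))}$, which swamps the claimed $\poly(n)\,2^{O(\band(G))}$.

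The paper sidesteps all of this with a cleaner reduction that your proposal misses: $q!/(q-\kappa(\omega))!$ is precisely the number of $q$-\emph{colorings} whose underlying unordered partition is $\omega$. The Glauber chain in which the empty class is counted with multiplicity in step~\ref{s.uniform} is exactly the natural chain on $q$-colorings, for which the \emph{uniform} distribution on colorings is stationary, and this projects to $\pi(\omega)\propto q!/(q-\kappa(\omega))!$ on partitions. Taking $\Omega$ to be the set of colorings rather than partitions, the entire proof of Theorem~\ref{t.part} — the same canonical paths, the same encoding, the same $|\Omega|$ on both sides of the congestion estimate — applies unchanged, and Theorem~\ref{t.color} follows with no additional analysis.
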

Note that this weighting corresponds to the case where we select a $q$-partition by selecting a random $q$-coloring and then ignoring the colors.

The mixing time given by all these results depend exponentially just on the bandwidth of $G$; they give algorithms for approximate sampling that are fixed-parameter-tractable for the bandwidth, and in particular we have subexponential (i.e., $2^{o(n)}$) algorithms any time $\cG$ is a family of graphs with sublinear bandwidth.  From a result of B{\"o}ttcher, Pruessmann, Taraz, and W{\"u}rfl \cite{characterizingbandwidth}, this is equivalent to requiring graphs in $\cG$ to have sublinear treewidth, or requiring that graphs in $\cG$ have no linear-size subgraphs with good expansion.  Thus our results give subexponential time algorithms in all these cases (and for all planar graphs, by the planar separator theorem).  Note, for example, that a square grid graph on $n$ vertices has bandwidth $\sqrt n$, as can be realized by the row-by-row ordering if its vertices.

Najt, Deford, and Solomon analyzed the worst-case complexity of sampling connected $q$-partitions in the case where $q$ is bounded and thus the typical piece-size is linear \cite{najt2019complexity}.  They show, for example that it is NP-hard to sample connected 2-partitions for planar graphs.  They also show that sampling connected $q$-partitions (for bounded $q$) is fixed-parameter tractable in the treewidth, although with a tower function dependence on the treewidth.  On Markov Chains, they show that there are families of graphs (even among, say, 3-connected planar graphs of bounded degree) for which Glauber dynamics is torpidly mixing for the case of connected $2$-partitions.  In Section \ref{s.torpid}, we will show that Glauber dynamics can even be torpidly mixing for $q=2$ on bounded-bandwidth subgraphs of the grid.  In particular, we will give an example of a family of graphs which are simply the union of two overlapping grid graphs of bounded bandwidth, with exponential mixing time.

\section{Proofs of positive results}
\subsection{Markov chains for partitions}\label{s.glauber}
In this section we define the Glauber dynamics for connected partitions.  For a fixed connected graph $G$ on $n$ vertices, we let $\Omega_{q,B,G}$ consist of all $B$-bounded contiguous-$q$-partitions of $G$.

We consider a Markov Chain $\cM$ on $\Omega_{q,B,G}$ defined by the following transition procedure from a state $\omega\in \Omega_{q,B,G}$:
\begin{enumerate}[(a)]
    \item Choose a uniformly random vertex $v\in G$;
    \item \label{s.uniform} Choose a uniformly random set among all sets in the partition $\omega$;
    \item Remove $v$ from its current set and make it a member of the randomly chosen set, if this move would result in a partition belonging to the state space $\Omega_{q,B,G}$.
\end{enumerate}

When step \ref{s.uniform} is carried out, there is a question of how one handles sets which occur multiple times in a fixed partition class $\omega$---do we consider $\omega$ to be a set of partition classes or a multiset? The only set which can occur multiple times as a partition class is the empty set, which will appear $q-\kappa$ times, where $\kappa$ is the number of nonempty color classes in $\omega$.  In the case where we include the empty-set with multiplicity in the uniform random selection in Step \ref{s.uniform}, the stationary distribution of $\cM$ is not uniform on partitions but on colorings; e.g., partitions with more nonempty classes are more likely.  In the case where we include the empty-set without multiplicity, the uniform distribution on partitions is stationary.  Our methods are not sensitive to the differences between these two choices of the chain.  The first chain will be the underpinning of Theorem \ref{t.color}, while the second underpins Theorem \ref{t.part}.   Note that in the first case, we can consider $\Omega$ to be the set of $q$-colorings instead of $q$-partitions, so that we have $\pi(\omega)=\frac{1}{|\Omega|}$ in all cases.

In fact, $\cM$ can fail to be irreducible on the state space $\Omega_{q,B,G}$, even when $q=Cn$ and $B$ is large and $G$ is a grid graph (Figure \ref{fig:irreducible}).   But we will define $\Omega^K_{q,B,G}$ to be those $q$-partitions $\omega$ of $G$ which agree with some $\omega'$ in $\Omega_{q,B,G}$ on all but at most $K$ vertices.  This is the chain we will show is rapidly mixing using the method of canonical paths.

\begin{figure}
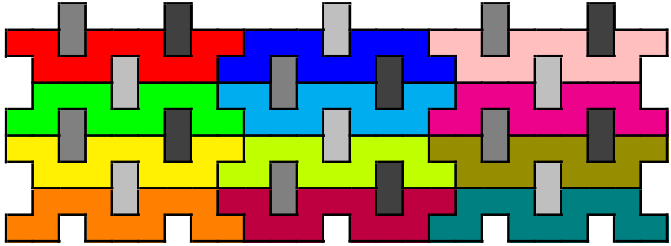

    \centering
\OMINO{
....+-+.....+-+.........+-+.........+-+.....+-+....\\
....|3|.....|4|.........|2|.........|3|.....|4|....\\
+---+.+-----+.+---+-----+.+-----+---+.+-----+.+---+\\
|r.r|3|r.r.r|4|r.r|b.b.b|2|b.b.b|p.p|3|p.p.p|4|p.p|\\
+-+.+-+.+-+.+-+.+-+.+-+.+-+.+-+.+-+.+-+.+-+.+-+.+-+\\
.w|r.r.r|2|r.r.r|b.b|3|b.b.b|4|b.b|p.p.p|2|p.p.p|w.\\
..+-----+.+-----+---+.+-----+.+---+-----+.+-----|w.\\
.w|g.g.g|2|g.g.g|c.c|3|c.c.c|4|c.c|m.m.m|2|m.m.m|w.\\
+-+.+-+.+-+.+-+.+-+.+-+.+-+.+-+.+-+.+-+.+-+.+-+.+-+\\
|g.g|3|g.g.g|4|g.g|c.c.c|2|c.c.c|m.m|3|m.m.m|4|m.m|\\
+---+.+-----+.+---+-----+.+-----+---+.+-----+.+---+\\
|y.y|3|y.y.y|4|y.y|l.l.l|2|l.l.l|o.o|3|o.o.o|4|o.o|\\
+-+.+-+.+-+.+-+.+-+.+-+.+-+.+-+.+-+.+-+.+-+.+-+.+-+\\
.w|y.y.y|2|y.y.y|l.l|3|l.l.l|4|l.l|o.o.o|2|o.o.o|w.\\
..+-----+.+-----+---+.+-----+.+---+-----+.+-----|w.\\
.w|O.O.O|2|O.O.O|P.P|3|P.P.P|4|P.P|t.t.t|2|t.t.t|w.\\
+-+.+-+.+-+.+-+.+-+.+-+.+-+.+-+.+-+.+-+.+-+.+-+.+-+\\
|O.O|w|O.O.O|w|O.O|P.P.P|w|P.P.P|t.t|w|t.t.t|w|t.t|\\
+---+.+-----+.+---+-----+.+-----+---+.+-----+.+---+\\
}
    \caption{When tiled, this configuration has maximum partition class size $13$, average class size $32/5=6.4$, but is rigid (no Glauber dynamics transitions are possible) if $B=13$ unless there are unused colors.  Any change to existing pieces which preserves continuity either disconnects one of the large pieces, or increases the size of a large piece beyond the threshold $B=13$.}
    \label{fig:irreducible}
    \end{figure}

\bigskip

\subsection{Three Lemmas}A crucial ingredient of our proof is the so-called \emph{mountain-climbing problem} \cite{mountain52,mountain89}.  Roughly speaking, this problem asserts that two mountain climbers, each beginning at sea level, can traverse a 2-dimensional mountain range in such a way that they meet at the summit, and at all times are at equal altitudes (Figure \ref{fig:mountain}).  Note that, in general, this requires the mountain climbers to change directions (and sometimes walk away from the summit), and that quadratically many such changes can be needed, in the number of local of extrema of the mountain range.

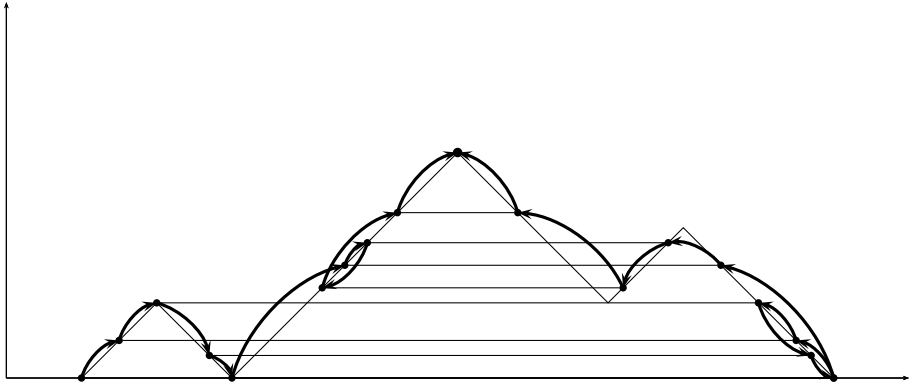
\begin{figure}[t]
  \begin{center}
\begin{pdfpic}
    \psset{unit=1cm,arrowscale=1}
    \begin{pspicture}(-1,0)(10,5)
      \psaxes[labels=none,ticks=none,linewidth=.1pt]{->}(-1,0)(11,5)
\psline[linewidth=.4pt](0,0)(1,1)(2,0)(5,3)(7,1)(8,2)(10,0)

\pnode(0,0){A}\pnode(10,0){a}
\pnode(.5,.5){B}\pnode(9.5,.5){b}
\pnode(1,1){C}\pnode(9,1){c}
\pnode(1.7,.3){D}\pnode(9.7,.3){d}
\pnode(2,0){E}\pnode(10,0){e}
\pnode(3.5,1.5){F}\pnode(8.5,1.5){f}
\pnode(3.8,1.8){G}\pnode(7.8,1.8){g}
\pnode(3.2,1.2){H}\pnode(7.2,1.2){h}
\pnode(4.2,2.2){I}\pnode(5.8,2.2){i}
\pnode(5,3){Jj}

\psline[linewidth=.4pt,arrows=*-*](A)(a)
\psline[linewidth=.4pt,arrows=*-*](B)(b)
\psline[linewidth=.4pt,arrows=*-*](C)(c)
\psline[linewidth=.4pt,arrows=*-*](D)(d)
\psline[linewidth=.4pt,arrows=*-*](E)(e)
\psline[linewidth=.4pt,arrows=*-*](F)(f)
\psline[linewidth=.4pt,arrows=*-*](G)(g)
\psline[linewidth=.4pt,arrows=*-*](H)(h)
\psline[linewidth=.4pt,arrows=*-*](I)(i)

\psdot(Jj)

\ncarc[arcangleA=30,arcangleB=30,linewidth=1.2pt]{->}{A}{B}
\ncarc[arcangleA=30,arcangleB=30,linewidth=1.2pt]{->}{B}{C}
\ncarc[arcangleA=30,arcangleB=30,linewidth=1.2pt]{->}{C}{D}
\ncarc[arcangleA=30,arcangleB=30,linewidth=1.2pt]{->}{D}{E}
\ncarc[arcangleA=30,arcangleB=30,linewidth=1.2pt]{->}{E}{F}
\ncarc[arcangleA=30,arcangleB=30,linewidth=1.2pt]{->}{F}{G}
\ncarc[arcangleA=30,arcangleB=30,linewidth=1.2pt]{->}{G}{H}
\ncarc[arcangleA=30,arcangleB=30,linewidth=1.2pt]{->}{H}{I}
\ncarc[arcangleA=30,arcangleB=30,linewidth=1.2pt]{->}{I}{Jj}

\ncarc[arcangleA=-30,arcangleB=-30,linewidth=1.2pt]{->}{a}{b}
\ncarc[arcangleA=-30,arcangleB=-30,linewidth=1.2pt]{->}{b}{c}
\ncarc[arcangleA=-30,arcangleB=-30,linewidth=1.2pt]{->}{c}{d}
\ncarc[arcangleA=-30,arcangleB=-30,linewidth=1.2pt]{->}{d}{e}
\ncarc[arcangleA=-30,arcangleB=-30,linewidth=1.2pt]{->}{e}{f}
\ncarc[arcangleA=-30,arcangleB=-30,linewidth=1.2pt]{->}{f}{g}
\ncarc[arcangleA=-30,arcangleB=-30,linewidth=1.2pt]{->}{g}{h}
\ncarc[arcangleA=-30,arcangleB=-30,linewidth=1.2pt]{->}{h}{i}
\ncarc[arcangleA=-30,arcangleB=-30,linewidth=1.2pt]{->}{i}{Jj}

    \end{pspicture}
\end{pdfpic}
\end{center}
\caption{\label{fig:mountain} The mountain climbing lemma: two mountain climbers starting at the same height, which is a global minimum for the mountain range, can coordinate their movements so that they are at all times at equal heights, and at some point reach a common summit.}
\end{figure}

To state the version we will use, define, for any integrable function $f:S^1\to \R$ whose integral on the whole circle is 0, the set of intervals $\cI^f$ to be those intervals of the circle on which the integral of $f$ is 0.  We include singletons and the whole circle in $\cI^f$ as degenerate cases.
 We then  have the following:
\begin{theorem}\label{t.mountain}
  Let $f:S^1\to \{-1,+1\}$ be any function on the circle with finitely many discontinuities, and $\int_{S^1} f(x)dx=0.$  Then there is a continuous function $I:[0,T]\to \cI^f$ with $I(0)$ a singleton and $I(T)=S^1$.\qed
\end{theorem}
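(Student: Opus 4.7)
The plan is to reduce Theorem~\ref{t.mountain} to the classical mountain-climbing theorem for piecewise-linear profiles on an interval, as cited in~\cite{mountain52,mountain89}. Fix an orientation of $S^1$, let $L$ be its length, and for an arbitrary base point $x_0 \in S^1$ let $G(x) = \int_{x_0}^x f(y)\,dy$ (integrated in the positive direction). The hypothesis $\int_{S^1} f = 0$ makes $G$ well-defined on $S^1$; and since $f \in \{\pm 1\}$ away from finitely many points, $G$ is continuous and piecewise linear with slopes $\pm 1$. Choose $p_0 \in S^1$ to minimize $G$, and set $g(x) := G(x) - G(p_0) \geq 0$. Cutting $S^1$ at $p_0$ identifies it with an interval $[0, L]$ on which $g(0) = g(L) = 0$. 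The key reformulation is that an arc $[a, b] \subseteq [0, L]$ lies in $\cI^f$ iff $g(a) = g(b)$.

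Next, let $t^* \in (0, L)$ attain the global maximum of $g$ (this is an interior point since $g \not\equiv 0$). Then $g_1(s) := g(s)$ on $[0, t^*]$ and $g_2(s) := g(L-s)$ on $[0, L-t^*]$ are each continuous, piecewise linear with slopes $\pm 1$ (in particular, no horizontal segments), start at $0$, and end at the common summit height $g(t^*)$. Applying the classical mountain-climbing theorem to $g_1$ and $g_2$ (after rescaling domains and codomains) yields continuous $a:[0,T] \to [0,t^*]$ and $b:[0,T] \to [t^*,L]$ with $a(0) = 0$, $b(0) = L$, $a(T) = b(T) = t^*$, and $g(a(s)) = g(b(s))$ for all $s \in [0, T]$: two climbers on the profile $g$, at equal altitudes throughout, starting at opposite endpoints and meeting at the summit.

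Finally, set $I(s) := [a(T-s),\, b(T-s)]$. Then $I(0) = \{t^*\}$ is a singleton, $I(T) = [0, L]$ is all of $S^1$ (since $0$ and $L$ both correspond to $p_0$), and for every $s$ one has $\int_{I(s)} f = g(b(T-s)) - g(a(T-s)) = 0$, so $I(s) \in \cI^f$. Continuity of $I$ in $\cI^f$ (in the natural Hausdorff topology on arcs) follows from continuity of $a$ and $b$. This is the desired map.

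The principal obstacle is the classical mountain-climbing theorem itself; but this is well-established for piecewise-linear profiles with no horizontal segments, which is precisely the regularity that $g$ exhibits. The remaining steps are a matter of choosing the antiderivative base point and the summit, and reinterpreting the climbers' positions as endpoints of an arc.
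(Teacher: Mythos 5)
Your proof is correct and follows essentially the same route as the paper's sketch: both reduce the circular statement to the classical mountain-climbing lemma by passing to the (well-defined, piecewise-linear, plateau-free) antiderivative of $f$, cutting the circle at a global minimum of that antiderivative, and reading off the two climbers' positions as the endpoints of an arc with zero integral. The only cosmetic difference is that you parameterize so the singleton $I(0)$ sits at the summit whereas the paper's sketch starts the singleton at the minimum $x_0$; otherwise the arguments coincide.
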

The connection between this and the version of the mountain climbing lemma depicted in Figure \ref{fig:mountain} is made by defining a function $F$ on the circle by $F(x)=\int_{t=0}^x f(x)$, where integration is done, say, in the counterclockwise direction.  Note that $F$ is continuous and in our setup, is piecewise linear with finitely many linear pieces. Taking $x_0$ to be the location of a minimum of this function, the mountain climbing lemma ensures that there are continuous paths $\alpha:[0,1]\to S^1$ and $\beta:[0,1]\to S^1$ from $x_0$ which never cross $x_0$, which eventually meet, and which have $F(\alpha(t))=F(\beta(t))$ at all times.  These functions give the endpoints of the intervals defined by the function $I(t)$.

Our analysis in this paper will make use of the following discrete version of Theorem \ref{t.mountain}:
\begin{lemma}\label{t.discreteint}
  Let the vertices $v_1,v_2,\dots,v_n$ of the $n$ cycle $C_n$ (indexed in cyclic order) be assigned labels $\ell(v_i)\in [-K,K]$, such that
  \[
  \sum_{i=1}^n\ell(v_i)=r.
  \]
  Then there is a sequence of subsets 
  \[
  \varnothing=X_0,X_1,X_2,\dots,X_T=V(C_n)
  \]
  of $V(C_n)$, such that each $X_i$, $0< i<T$ is a path, such the symmetric difference of $X_i$ and $X_{i+1}$ is single vertex for all $0\leq i<T$, and such that
  \[
  \forall 0\leq i\leq T, \quad \left|\sum_{v\in X_i} \ell(v)-\frac{|X_i|}{n}r \right|\leq 2K.
  \]
\end{lemma}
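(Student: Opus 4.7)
The plan is to reduce the statement to a continuous mountain-climbing problem and then carefully discretize the resulting path, handling local extrema of the partial-sum function by inserting intermediate states.

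First I recenter: setting $g(v_i) = \ell(v_i) - r/n$ gives $\sum g(v_i) = 0$ and $|g(v_i)| \leq 2K$, and the target inequality becomes $|\sum_{v \in X_i} g(v)| \leq 2K$. Let $F : \mathbb{R} \to \mathbb{R}$ be the piecewise linear periodic (period $n$) extension of the partial sums $F(j) = g(v_1) + \cdots + g(v_j)$, so that every integer-endpoint arc $\{v_{a+1}, \dots, v_b\}$ has $g$-sum $F(b) - F(a)$, and the slope of $F$ between consecutive integers lies in $[-2K, 2K]$. Next apply the general mountain-climbing lemma (the straightforward extension of Theorem~\ref{t.mountain} to piecewise linear $F$, cf.\ \cite{mountain52,mountain89}) to obtain continuous $a, b : [0, T] \to \mathbb{R}$ with $a(0) = b(0) = m$ (where $m$ is an integer minimum of $F$), $b(T) - a(T) = n$, and $F(a(s)) = F(b(s))$ for every $s$.

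For the discretization, at each time $s$ round $a(s)$ and $b(s)$ toward the integer neighbor with the larger $F$-value: $A(s) = \arg\max\{F(\lfloor a(s) \rfloor), F(\lceil a(s) \rceil)\}$, and $B(s)$ analogously. Because $F$ is linear between consecutive integers with slope at most $2K$ in absolute value, the rounding errors $\delta_A(s) = F(A(s)) - F(a(s))$ and $\delta_B(s) = F(B(s)) - F(b(s))$ both lie in $[0, 2K]$. Combined with $F(a(s)) = F(b(s))$, this yields $|F(B) - F(A)| = |\delta_B - \delta_A| \leq 2K$, which is exactly the required bound for the discrete arcs $X = \{v_{A+1}, \dots, v_{B}\}$.

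The main obstacle is ensuring that consecutive discrete pairs differ by exactly one vertex. The pair $(A(s), B(s))$ is piecewise constant in $s$ and can change only when $a$ or $b$ crosses an integer $a_*$; the behavior depends on the local shape of $F$ at $a_*$. At a monotone crossing, $A$ shifts by $\pm 1$, as required. At a crossing where $a_*$ is a local maximum of $F$, the higher-$F$ neighbor is $a_*$ on both sides and $A$ does not change (which is fine, since no discrete transition is needed). At a crossing where $a_*$ is a local minimum of $F$, the higher-$F$ neighbor jumps by $2$; this is repaired by inserting an intermediate discrete state $A = a_*$ at the instant $a(s) = a_*$, at which $\delta_A = 0$ and hence $|F(B) - F(A)| = |\delta_B| \leq 2K$ still holds. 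Concatenating all the resulting single-vertex transitions (perturbing the path slightly if $a$ and $b$ would otherwise cross integers simultaneously) and translating the pairs into arcs $X_i$ gives the desired sequence $X_0 = \emptyset, X_1, \dots, X_T = V(C_n)$ with all the required properties.
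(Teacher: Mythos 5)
Your proof is correct and follows the same core strategy as the paper --- reduce to the continuous mountain-climbing theorem on the circle and then discretize the continuous path --- but you realize the discretization in a genuinely different way. The paper subdivides $S^1$ into intervals $I_i$ of length $|\ell(v_i)|/L$, sets $f=\pm1$ according to the sign of $\ell(v_i)$, tracks the list of intervals meeting $I(t)$, and extracts the discrete path via a connectivity argument in an auxiliary graph on pairs $(v_i,v_j)$. You instead put the vertices on unit intervals, work with the piecewise-linear partial-sum function $F$ of slope $g(v_j)$, and discretize by rounding the continuous endpoints $a(s),b(s)$ to the neighboring integer of larger $F$-value. Your rounding-error argument ($\delta_A,\delta_B\in[0,2K]$ so $|F(B)-F(A)|\le 2K$) is clean, and the local-max/local-min case analysis at integer crossings (inserting an intermediate state at a local minimum) correctly repairs the one place a jump of size two could occur. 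One notable point: your explicit recentering $g=\ell-r/n$, which gives $\sum g=0$ and slope bound $2K$, is what makes Theorem~\ref{t.mountain}'s zero-integral hypothesis hold when $r\ne0$; the paper's writeup applies the theorem directly to $f$ defined from $\ell$, for which $\int f=r/L$ is not zero in general, so it is implicitly relying on the same recentering without stating it. Your treatment is the more careful one. The only loose ends you should tidy up are (i) fixing a consistent tiebreaking rule for the rounding when $g(v_j)=0$ makes $F$ flat on a unit interval, and (ii) making the ``perturb so $a$ and $b$ don't cross integers simultaneously'' step precise, e.g.\ by processing the two endpoint moves in an arbitrary order and checking the bound holds for the intermediate pair --- the same slack $\delta_A,\delta_B\in[0,2K]$ covers that case as well.
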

\begin{proof}
  Let $L=\sum_i |\ell(v)_i|$, and, with the aim of applying Theorem \ref{t.mountain}, recursively define intervals $I_0,I_1,\dots,I_n$ corresponding to each vertex $v_i$ of $C_n$, each of the form $[x_i,x_{i+1})$, where the length $x_{i+1}-x_i$ of $I_i$ is given by
    \[
    x_{i+1}-x_i=\frac 1 L |\ell(v)_i|.
    \]
    We then define $f$ on $S^1$ to be 1 on intervals $I_i$ whose corresponding vertex $v_i$ has a positive label $\ell(v_i)$, and $-1$ otherwise.    Theorem \ref{t.mountain} gives us the continuous interval function $I:[0,T]\to \cI^f$; let the corresponding left- and right-endpoint functions be $a(t)$ and $b(t)$.

    Now define $\cX(t)$ to be the cyclically ordered list intervals of $I_i$ intersecting $I(t)$.   We define a graph whose vertex set $Y\subseteq V(C_n)\times V(C_n)$ consisting of all the pairs $(v_i,v_j)$ for which     there exists a $t$ such that $\cX(t)=(I_i,I_{i+1},I_{i+2},\dots,I_j)$.  Define two vertices $(v_i,v_j)$ and $(v_s,v_t)$ in this graph to be adjacent if there is a $t_0$ such that for all $\delta>0$, $\cX\big((t_0-\delta,t_0+\delta)\big)$ includes both $(I_i,\dots,I_j)$ and $(I_s,\dots,I_t)$.  By continuity of $I(t)$, we have that $s\in \{i-1,i,i+1\}$ and $t\in \{j-1,j,j+1\}$.  Viewing the points on the edges and vertices of this graph as a topological space, $I(t)$ induces a continuous path in this topological space from a pair $(v_i,v_i)$ whose corresponding list of intervals is a singleton, to a pair $(v_i,v_{i-1})$ whose corresonding list includes every interval.  In particular, these two vertices lie in the same connected component of this graph and there is a (discrete) path in the graph from one to the other.  To obtain the statement of the theorem, we replace any edges of the path $(v_i,v_j)\sim (v_s,v_t)$ for which $s=i\pm 1$ and $t=j\pm 1$ with a path of length 2 of the form $(v_i,v_j)\sim (v_i,v_t)\sim (v_s,v_t)$.

    Since the integral of $\int_{I(t)}f(x)dx=0$ for any $t$, and the sum of the lengths of the intervals in $\cX(t)$ differs from this integral by at most $2K$ for any $t$, this path gives the sequence of subsets claimed by the theorem.
\end{proof}

The next lemma is a consequence of the Local Lemma, and will be used to control the congestion near the endpoints of our canonical paths.
\begin{lemma}\label{local}For $\Delta$ sufficiently large (e.g., at least $e^{e^4}$), we have that for any bipartite graph on the bipartition $A\,\dot\cup \,B$ of maximum degree at most $\Delta$, where vertices in $A$ have degree at least $100 \Delta/\log\Delta$, there is a mapping $\theta:A\to B$ such $(a,\theta(a))$ is an edge for all $a\in A$, and every preimage $\theta^{-1}(b)$ has size at most $\tfrac 1 2 \log \Delta$.
\end{lemma}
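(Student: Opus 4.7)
The plan is to apply the symmetric Lov\'asz Local Lemma (LLL) to the random assignment in which each $a \in A$ independently selects $\theta(a) \in N(a)$ uniformly at random. For each $b \in B$, let $X_b$ be the bad event that $|\theta^{-1}(b)| > \tfrac{1}{2}\log\Delta$; it suffices to show $\Pr\bigl[\bigcap_{b\in B}\overline{X_b}\bigr] > 0$.

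The first step is to bound $\Pr[X_b]$ using the minimum-degree hypothesis on $A$. Since every $a \in A$ has $\deg(a) \geq 100\Delta/\log\Delta$, we have $\Pr[\theta(a) = b] \leq p := \log\Delta/(100\Delta)$ for each $a \in N(b)$. Because $|\theta^{-1}(b)|$ is a sum of at most $|N(b)| \leq \Delta$ independent Bernoulli$(\leq p)$ indicators, the standard binomial tail bound $\Pr[\mathrm{Bin}(n,p) \geq k] \leq (enp/k)^k$ yields
\[
\Pr[X_b] \leq \left(\frac{e\Delta p}{\tfrac{1}{2}\log\Delta}\right)^{\tfrac{1}{2}\log\Delta} = \left(\frac{e}{50}\right)^{\tfrac{1}{2}\log\Delta}.
\]

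The second step is to control the dependency structure of the events $\{X_b\}_{b\in B}$: because $X_b$ is determined solely by $\{\theta(a) : a \in N(b)\}$, it is mutually independent of every $X_{b'}$ for which $N(b) \cap N(b') = \varnothing$. At most $\sum_{a\in N(b)}(\deg(a)-1) \leq \Delta^2$ vertices $b'\neq b$ share a neighbor with $b$, so the dependency graph has maximum degree at most $\Delta^2$.

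The final step is to verify the symmetric LLL condition $e\cdot\Pr[X_b]\cdot(\Delta^2+1) \leq 1$, which with the bound above holds once $(e/50)^{(1/2)\log\Delta}\cdot\Delta^2$ is sufficiently small. The only delicate point in the proof is this constant arithmetic: the $100$ in the degree hypothesis is large enough that the coefficient $\tfrac{1}{2}\log(50/e)$ in the exponent of $\Delta$ exceeds $2$, so $(e/50)^{(1/2)\log\Delta}$ decays strictly faster than $\Delta^{-2}$ and therefore absorbs the $\Delta^2$ contribution from the dependency degree. The hypothesis $\Delta \geq e^{e^4}$ comfortably handles the remaining multiplicative constants, and the LLL then produces the desired mapping $\theta$.
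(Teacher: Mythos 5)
Your approach is essentially the paper's own: you assign each $a\in A$ a uniformly random neighbor, you use exactly the same bad events and the same $\Delta^2$ bound on the dependency degree, and you close with the symmetric Local Lemma. The only real divergence is the tail estimate—you apply the direct binomial bound $\Pr[\mathrm{Bin}(n,p)\geq k]\leq (enp/k)^k$, whereas the paper passes through a Poisson domination of the binomial and then uses the Poisson upper tail. Both are standard and this is a cosmetic choice, not a different route.

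The issue is the step you yourself flag as ``the only delicate point,'' and there it does not in fact go through cleanly. You need $(e/50)^{(1/2)\log\Delta}\cdot\Delta^2$ to be small, i.e.\ $\tfrac12\log_b(50/e)>2$ where $b$ is the base of the logarithm, i.e.\ $50/e>b^4$. Since $50/e\approx 18.4$, this holds only for $b=2$ (where $2^4=16<18.4$, barely), and \emph{fails} for natural logarithm ($e^4\approx 54.6$), where $\tfrac12\ln(50/e)\approx 1.46<2$ and the LLL inequality $e\cdot\Pr[X_b]\cdot(\Delta^2+1)\leq 1$ is violated for all large $\Delta$. Given that the hypothesis is stated as $\Delta\geq e^{e^4}$, natural log is at least as plausible a reading, so your proof as written is not self-contained: it silently depends on a base-$2$ convention, and even then only by a hair ($50>16e\approx 43.5$). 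The paper avoids this knife's edge by (in its actual computation) taking the threshold $D=\log\Delta$ rather than $\tfrac12\log\Delta$, which yields the superpolynomial bound $\Pr[\cE_{b,D}]\leq \Delta^{-(\log\log\Delta-1)}$ and swamps $\Delta^2$ comfortably under any reasonable base—so there is in fact a factor-of-two slack to exploit. You should either specify the logarithm base and note explicitly that only $\log_2$ works, or relax the preimage threshold to (say) $\tfrac34\log\Delta$ or $\log\Delta$, which still suffices for every place Lemma~\ref{local} is invoked and makes the constant arithmetic robust.
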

\begin{proof}
Consider assigning to each vertex $a\in A$ a random vertex in its neighborhood, uniformly at random, and making all $|A|$ of these choices independently.

We associate to each vertex $b\in B$ a bad event $\cE_{b,D}$, which is the event that more than $D$ vertices $a\in A$ (with $a\sim b$) get assigned the vertex $b$.  For fixed $D$, we obtain a valid dependency graph in the sense of the Lov\'asz Local Lemma for this collection of bad events by letting $\cE_{b,D}\sim \cE_{b',D}$ if there is an $a$ such that $b\sim a\sim b'$; the degree of this dependency graph is thus at most $\Delta^2$.

We bound the probability of each $\cE_{b,D}$ by comparison with a Poisson distribution.
For 
\[
\lambda=-\ln(1-p)
\]
we have $e^{-\lambda}=(1-p)$ and so a Bernoulli with success probability $p$ is dominated by a Poisson distribution of rate $\lambda$ ($\Pr(Po=0)=1-p=\Pr(Be=0)$).  Thus a random variable distributed as Binomial$(n,p)$ is dominated in distribution by a random variable distributed as Poisson with mean $\lambda=-n\ln(1-p)$. Finally, recall that the Poisson random variable $\nu$ with mean $\lambda$ satisfies the upper tail bound
\begin{equation}\label{upperpos}
\Pr(\nu\geq K)\leq e^{-\lambda}\left(\frac{e\lambda}{K}\right)^K.
\end{equation}
In our case $n=\Delta$ and letting $\alpha=100/\log\Delta$ and $p=\frac{1}{\alpha \Delta}$ and using the bounds
\[
\Delta p<\lambda<\Delta p/(1-p),
\]
we have
\begin{multline*}
\Pr(\cE_{b,D})\leq e^{-{1/\alpha}}\left(\frac{1}{1-\frac{1}{\alpha\Delta}}\right)^{D}\left(\frac{e}{\alpha D}\right)^D\\
=e^{-{1/\alpha}}\left(1+\frac{1}{\alpha \Delta-1}\right)^{D}\left(\frac{e}{\alpha D}\right)^D\leq e^{D/(\alpha \Delta-1)}\frac{e^D}{\alpha^D}\frac{1}{D^D}
\\\leq \frac{e^{2D}}{\alpha^D}\frac{1}{D^D}\leq \left(\frac{2}{D}\right)^D
\end{multline*}
for $\alpha\Delta\geq 2$ and $D\geq 2e^2/\alpha$.  So for $D=\log \Delta,$ $\alpha= \frac {100} {\log \Delta}$,
\[
\Pr(\cE_{b,D})\leq \frac{1}{\Delta^{(\log \log \Delta)-1}},
\]
and so for, say, $\Delta\geq e^{e^4}$, we have $ed\Pr(cE_{b,D})<1$ in the application of the Local Lemma (recall the dependency graph has degrees less than $d=\Delta^2)$ and so the Local Lemma applies to show the existence of the claimed subgraph.  Note that $\Delta\geq 2^{10}$ follows already from the assumption that $\Delta\geq \alpha=\frac {100} {\log \Delta}$.\\
\end{proof}

The final lemma in this section simply captures the balance we can ensure when partioning arbitrary connected graphs of bounded degree into connected pieces.
\begin{lemma}\label{l.treebreak}
If $T$ is a tree on $n\geq 2$ vertices and of maximum degree at most $\Delta$ with root $v_0$ of degree $\leq \Delta-1$, then for any $B<n$, we can partition $T$ into subtrees $T''$ and $T'$ so that $\frac{B-1}{\Delta-1}< |V(T'')|\leq B$ and $v_0\in T'$.
\end{lemma}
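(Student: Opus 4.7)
The plan is to root $T$ at $v_0$ and choose $T''$ to be the subtree hanging off a carefully chosen vertex. For every vertex $v\neq v_0$, let $T_v$ denote the subtree of $T$ consisting of $v$ and all its descendants (with respect to the rooting at $v_0$). The goal is to show there exists $v\neq v_0$ with $\frac{B-1}{\Delta-1}<|V(T_v)|\leq B$; setting $T''=T_v$ and $T'=T\setminus T''$ then satisfies the lemma since removing a descendant subtree leaves a connected subtree containing $v_0$.

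The key step is to locate this $v$ by a ``descend as deep as possible while the subtree is still big'' argument. Since $n>B$, the root itself satisfies $|V(T_{v_0})|=n>B$, so there is at least one vertex $w$ with $|V(T_w)|>B$. Choose such a $w$ of maximum depth. Then by maximality every child $c$ of $w$ has $|V(T_c)|\leq B$. Now count: $w$ has at most $\Delta-1$ children (by assumption if $w=v_0$; because one incident edge goes to the parent if $w\neq v_0$). Writing $|V(T_w)|=1+\sum_{c}|V(T_c)|>B$ gives $\sum_c |V(T_c)|>B-1$, so by averaging over the at most $\Delta-1$ children there is some child $c^*$ with
\[
|V(T_{c^*})|>\frac{B-1}{\Delta-1}.
\]
Since also $|V(T_{c^*})|\leq B$, this $c^*$ is the desired vertex and we take $T''=T_{c^*}$.

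There is no serious obstacle here; the only mild subtlety is making sure that the vertex producing $T''$ is distinct from $v_0$. This is automatic because we take a child of $w$ rather than $w$ itself, and the degree hypothesis on $v_0$ is used only to guarantee the ``at most $\Delta-1$ children'' bound in the boundary case $w=v_0$, which is exactly what makes the pigeonhole step go through uniformly.
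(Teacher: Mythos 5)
Your proof is correct and uses essentially the same idea as the paper: both descend from $v_0$ and apply pigeonhole over the at most $\Delta-1$ children of a vertex whose subtree still exceeds $B$. The paper phrases this as an induction (recursing into a too-large component), whereas you make the descent explicit by taking a deepest vertex $w$ with $|V(T_w)|>B$; the two presentations are interchangeable.
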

\begin{proof}
Consider the trees $T_1,T_2,\dots,T_d$ ($d\leq \Delta-1$) that are the connected components of $T\setminus v_0$, and let $v_i$ be the neighbor of $v_0$ in $T_i$.  Since $\frac{n-1}{\Delta-1}>\frac{B-1}{\Delta-1}$, there is some $i$ such that $|V(T_i)|>\frac{B-1}{\Delta-1}$.  If  $|V(T_i)|\leq B$ as well, we're done; otherwise, we apply induction to the tree $T_i$ rooted at $v_i$.
\end{proof}

\subsection{Repartionable graphs}
\label{sec:repartitionable}

In this section we revisit Definition \ref{def:partitionable}. We begin by proving Proposition \ref{p.connectedgraphs}, showing that graphs of maximum degree $\Delta$ are $(B,\frac{1}{\Delta-1}\frac{B-1}{B})$-partitionable. 
\begin{proof}[Proof of Proposition \ref{p.connectedgraphs}]
Applying Lemma \ref{l.treebreak} recursively shows that we can partition $G$ into pieces of size greater than $\frac{B-1}{\Delta-1}$ and at most $B$.  Moreover this can be done in polynomial time since at each application, there are only linearly many choices for the division claimed to exist by the Lemma.  This proves the first part of the definition holds as claimed.

We will show the second part of the definition is satisfied for any 
\begin{equation}\label{e.K}
K=\frac{2 B\Delta}{\ep}+1.
\end{equation}
Let $G$ be any connected graph and $c$ any $B$-bounded contiguous-$\kappa$-coloring of $G$ with \[
\kappa\geq \frac{(1+\ep)n(\Delta-1)}{B-1}.
\]
Define the graph $H$ whose vertex-set is the color classes of $c$, with two color classes being adjacent in $H$ if they are joined by any edge in $G$.  Note that the maximum degree of $H$ is at most $B\cdot \Delta$.  Letting $k=\lceil\tfrac 2 {\ep} \rceil$, we see from Lemma \ref{l.treebreak} that we can partition $H$ into connected pieces $H_1,\dots,H_s$ each of whose size satisfies $k<|V(H_i)|\leq k(B\Delta-1)+1$, except for one piece that may have smaller size.  In particular, by merging the small piece with another, we may assume that each piece satisfies 
\[
k<|V(H_i)|\leq kB\Delta+1.
\]
Each $H_i$ corresponds to a subgraph $G(H_i)$ of $G$ whose vertex set is the union of some color classes of $c$.  Since the average size of a color class of $c$ is less than $\frac{1}{1+\ep}\frac{B-1}{\Delta-1}$, there is a $\delta>0$ (depending on $B$, $\Delta$, $\ep$) and $\delta|V(G)|$ indices $i_1,i_2,\dots$ such that $G(H_{i_j})$'s is a union of color classes of $c$ which have average size less than $\frac{1}{1+\ep/2}\frac{B-1}{\Delta-1}$, for all $j$.

By Lemma \ref{l.treebreak}, this time applied to a $G(H_{i_j})$, there is a partition of $G(H_{i_j})$ into pieces of size at most $B$ and size at least $\frac{B-1}{\Delta-1}$, except for one piece that may have smaller size; if the partition of $G(H_{i_j})$ has $\leq k$ classes then we're done, as this partition can be used to give an alternative coloring $c'$ which agrees with $c$ except on $G(H_{i_j})$, which satisfies $|G(H_{i_j})|\leq K$ by \eqref{e.K}.  But the partition cannot have more than $k$ classes, since then then the average size of a piece in our partition would be at least
\[
\frac{k}{k+1}\frac{B-1}{\Delta-1}\geq \frac{1}{1+\ep/2}\frac{B-1}{\Delta-1}.
\]
This for each of the $\delta_0|V(G)$ indices $i_j$, we obtain a distinct contiguous coloring using fewer than $\kappa$ colors, and differing from $c$ on a connected subgraph of most $K$ vertices.
\end{proof}
Next we prove Proposition \ref{p.grids}, asserting that grids are $(B,1)$-partitionable for all $B$.
\begin{proof}[Proof of Proposition \ref{p.grids}]
The first part of the definition follows from the fact that grid graphs have Hamilton paths, and paths can be partitioned as claimed.

Next, let $G$ be a $k\times \ell$ grid on $n=k\ell$ vertices and assume $k\leq \ell.$  
We define parameters in two cases:\\
\textbf{Case 1:} If $k\leq 400B^2/\ep^2$, 
we set $L=k$, and $M=\lceil\tfrac{20B}{\ep}\rceil$.\\
\textbf{Case 2:} If $k,\ell>400B^2/\ep^2$, 
we set $L=M=\lceil\tfrac{20B}{\ep}\rceil$.\\

By the choice of $L$, there is a $k'\times \ell'$ subgrid $G'$ of $G$ with $k'$ and $\ell'$ multiples of $L$ and $M$, respectively, and for which $n':=k'\ell'\geq n-40B/\ep$.  In particular, for $n$ sufficiently large, given any $B$-bounded contiguous-$\kappa$-coloring $c$ of $G$ for $\kappa\geq (1+\ep)n/B$, at least $(1+\ep/2)n/B$ of its color classes intersect $G'$.  We can then partition $G'$ into $L\times M$ grid subgraphs $H_i$.  In Case 1 the area of each such grid subgraph is $\geq 20kB/\ep$ and for each $H_i$, the set $B(H_i)$ of boundary vertices---i.e., vertices that have neighbors in other $H_j$'s---has size at most $2k$ for all $i$.  In Case 2, the area of each $H_i$ is $\geq 400B^2/\ep^2$ and each boundary $B(H_i)$ has size at most $80B/\ep+4$.  In both cases, we have
\begin{equation}\label{BHi}
\frac{|B(H_i)|}{|V(H_i)|}\leq \frac{\ep}{4B}.
\end{equation}
Thus, fixing a $B$-bounded contiguous-$\kappa$-coloring, at most $\ep n/4B$ color classes can intersect a boundary vertex of an $H_i$.  Subtracting from the $\geq (1+\ep/2)n/B$ color classes that intersect $G'$, we are left with at least 
\[
(1+\ep/4)n/B\geq (1+\ep/4)n'/B
\]
color classes that are subsets of $V(G')$ and intersect exactly one $H_i$.  Since no $H_i$ can contain more than $L\cdot M$ color classes, we have for
\[
\delta=\frac{\ep}{8BLM}
\]
that there are $\delta n$ indices $i$ such that there are at least $(1+\ep/8)LM/B>LM/B$ color classes which are subsets of $H_i$.  Each of these can be used to provide a contiguous-$\kappa'$-coloring for $\kappa'<\kappa$ satisfying the definition.  Indeed, as each such $H_i$ is Hamiltonian, each can be partitioned into contiguous pieces are arbitrary sizes summing to $|V(H_i)|$; in particular, each can be partitioned into color classes of size exactly $B$, which in each case requires just $LM/B$ color classes per $H_i$.
\end{proof}

The first part of Definition \ref{def:partitionable} ensures that we can efficiently find an element of $\Omega_{q,B,G}$ as a starting point for our algorithm.  Theorems \ref{t.part} and \ref{t.color} will then follow from bounds on the mixing times of the Markov chain, which we achieve using the method of canonical paths.
\subsection{Outline of the proof strategy}
We bound the mixing time of $\cM$ using the method of canonical paths \cite{diaconis1991geometric,sinclair1992improved,jerrum1989approximating}, which requires us to define a path between any pair of states so that no edge is used by too many paths. 

The basic idea of our approach to defining canonical paths is simple. Given the graph $G$, we use the bandwidth ordering $\sigma$ and divide the vertex set $G$ into intervals of length $2B\band(G)$ in the ordering $\sigma$; this ensures that any piece of a $B$-bounded contiguous $q$-partition intersects at most two of these intervals.  

Next, given two valid partitions $\omega$ and $\omega'$ of $G$, the basic idea is that we would like to, one interval at a time, replace the classes of $\omega$ in one interval with the classes of $\omega'$.   First (in Phase 0 below), we must reduce the number of nonempty partition classes so that we have enough room, e.g., to break classes into singletons.

But when carrying out these copy operations, the crucial thing we need to maintain is that all intermediate states use few enough partition classes that they are valid members of the state space, but also sufficiently many partition classes that they are not a high-congestion node (note that we should expect there to be few partitions which use few classes; thus it is not surprising that canonical paths cannot make heavy use of such partitions as intermediate states).  This is where the mountain climbing problem enters.  Lemma \ref{t.discreteint} ensures that there is some order in which we can copy (and uncopy!) the partition $\omega'$ into the partition $\omega$ so that we eventually have entirely replaced $\omega$ with $\omega'$, but at all intermediate states have a ``just right'' number of nonempty partition classes.
\subsection{Defining Valid Canonical Paths}
For $G$ an $(\alpha,B)$-repartionable graph, and any $\ep>0$, let $K$ be as given in Definition \ref{def:partitionable}.  We now define canonical paths between any states $\omega,\omega'$ of $\Omega=\Omega_{q,B,G}^K$, for $q>(1+\ep)n/(\alpha B)$.  Define $\kappa(\omega)$ to be the number of non-null color classes $\omega$.  We let $t=2B\band(G)$.

\subsubsection*{Phase 0}
From the definition of $(B,\alpha)$-repartionable and our choice of $q>(1+\ep)n/(\alpha B)$, we can define paths
\begin{equation}\label{phase0om}
\omega=\omega_0,\omega_1,\dots,\omega_{s_0}
\end{equation}
and
\begin{equation}\label{phase0om'}
\omega'=\omega'_0,\omega'_1,\dots,\omega'_{t_0}
\end{equation}
in $\Omega$ such that
$\omega_{s_0}$ and $\omega_{t_0}$ both have fewer than $q-10t$ nonempty color classes, and where $s_0,t_0\leq 10Kt$. For example, applying the definition of $(B,\alpha)$-partitionable to $\omega$ gives us a sequence $\omega=\omega_0,\dots,\omega_k$, for $k<K$, where $\omega_k\in \Omega_{q-1,B,G}$ and for all $0<i<k$ we have $\omega_i\in \Omega^K_{q-1,B,G}$. Applying this repeatedly, we eventually obtain a sequence ending in the state $\omega_{s_0}\in \Omega_{q-10t,B,G}$, with all intermediate states in $\Omega^K_{q-1,B,G}$.

\bigskip
To reduce congestion, we choose the sequences \eqref{phase0om},\eqref{phase0om'} carefully, as follows.  For each $\kappa$ with $q-10t\leq \kappa\leq q$, we define $\Omega_\kappa\subseteq \Omega$ to be those colorings with exactly $\kappa$ nonempty color classes.  We define a bipartite graph on $A\,\dot\cup\,B$ for $A=\Omega_{\kappa},B=\Omega_{\kappa-1}$ by letting $\omega$ be adjacent to $\omega'$ in this graph for $\omega\in \Omega_{\kappa}, \omega'\in \Omega_{\kappa-1}$ if $\omega'$ differs from $\omega$ only on a connected subgraph of at most $K$ vertices.  From Definition \ref{def:partitionable}, there is $\delta>0$ such that the degree of a vertex $\omega\in A$ is least $\delta n$.  On other hand, very crudely, the number of subsets of $V(G)$ of size $K$ inducing connected subgraphs of $G$ is at most $n\Delta^K$; this gives an upper bound on the maximum degree of the bipartite graph.  Thus by Lemma \ref{local} and the upper bound on $\Delta$ of order $(\delta\log n)^{1/K}$
\[
\delta n\geq \frac{100 n\Delta^K}{\log(100 n \Delta^K)}
\]
for sufficiently large $n$,
there is a mapping $\theta_\kappa:\Omega_{\kappa}\to \Omega_{\kappa-1}$ such that $\theta_\kappa(\omega)$ differs from $\omega$ only on a connected subgraph of at most $K$ vertices, and such that for all $\omega'\in \Omega_{\kappa-1}$, the preimage $\theta_\kappa^{-1}(\omega')$ has size at most 
\begin{equation}\label{thetabound}
\frac 1 2\log n +\frac12 K\log \Delta\leq \log n.
\end{equation}
We fix choices of the maps $\theta_\kappa$ for each $\kappa$, and use these maps when choosing sequences in \eqref{phase0om} and \eqref{phase0om'}.

\noindent For notational convenience, we write $\bar\omega=\omega_{s_0}$ and $\bar \omega'=\omega'_{t_0}$.

\subsubsection*{Setting up the next step}
It remains to define a path from $\bar \omega$ to $\bar \omega'$.

We let $\Phi(\omega)$ be the set of nonempty color classes of $\omega$ and let $\Phi_i(\omega)$ denote the set of color classes $C\in \Phi(\omega)$ for which
\[
\min_{v\in C}\left\lfloor\frac{\sigma(v)}{t}\right\rfloor=i.
\]
Observe that for $i=0,\dots,m-1$ ($m=\lfloor n/t\rfloor+1$) 
the $\Phi_i(\omega)$ form a partition of $\Phi(\omega)$.  We let $V(\Phi_i(\omega))$ denote the underlying vertex set of $\Phi_i(\omega)$; that is, the union of its elements.  Observe that by our choice of $t$, we have
\begin{equation}\label{tvcover}
(\forall \omega\in \Omega)\quad V(\Phi_i(\omega))\subseteq V_i\cup V_{i+1}.
\end{equation}
\begin{equation}\label{tcover}
(\forall \omega,\omega'\in \Omega)\quad V(\Phi_i(\omega'))\subseteq V(\Phi_{i-1}(\omega))\cup V(\Phi_{i}(\omega))\cup V(\Phi_{i+1}(\omega)),
\end{equation}
We write $\phi_i(\omega)$ for the size of $\Phi_i(\omega)$; i.e., the number of color classes it contains, and similarly write $\phi(\omega)$ for the size of $\Phi(\omega)$.

Now we label the vertices $v_0,v_1,\dots,v_m=v_0$ of the cycle $C_m$ (indexed in cyclic order) with the labels 
\[
\ell(v_i)=\phi_i(\bar \omega')-\phi_i(\bar \omega),
\]
which satisfy
\[
|\ell(v_i)|\leq t
\]
and let 
\begin{equation}\label{Xpath}
X_0,X_1,\dots,X_T
\end{equation}
be the sequence of subsets of the vertices of the cycle guaranteed to exist by Lemma \ref{t.discreteint}, with $T\leq m^2/2.$

Recall that each $X_i$ in $X_1,\dots,X_{T-1}$ induces a subgraph of $C_m$ that is a path, whose vertex set we call $I_i$.  Its complement is also a path, and we denote the interior of this complementary path by $O_i$.  Finally, we let $B_i=C_m\setminus (I_i\cup O_i$); so, for all $i=1,\dots,T-1$, $|B_i|=2$, and in this case we write $b_k^+$ and $b_k^-$ for the vertices in $B_k$ which are adjacent to the greatest and least vertices of $I_k$, respectively, in cyclic order.

Now with Phase 1 beginning from the state $\bar \omega_0=\bar \omega$, we will define Phases $1,\dots,T$ so each Phase $k$ ends at a state $\bar \omega_k$ such that:
\begin{enumerate}[(a)]
\item \label{agreesIk}
 For each $v_i\in I_k$, every color class $C'\in \Phi_i(\bar \omega')$ is a color class of $\bar\omega_k$.
\item \label{agreesOk}
 For each $v_i\in O_k$, every color class $C\in \Phi_i(\bar \omega)$ is a color class of $\bar \omega_k$.
\item \label{singleBk} For $v_i=b_k^+$, every vertex in $\big(V(\Phi_{i}(\bar\omega))\cup V(\Phi_{i-1}(\bar \omega))\big)\setminus V(\Phi_{i-1}(\bar\omega'))$ is a singleton color class in $\bar \omega_k$, while for $v_i=b_k^-$, every vertex in $\big(V(\Phi_{i}(\bar\omega))\cup V(\Phi_{i+1}(\bar \omega))\big)\setminus V(\Phi_{i+1}(\bar\omega'))$ is a singleton color class in $\bar \omega_k$.
\end{enumerate}

In particular , these conditions imply that
\begin{equation}\label{kappalemma}
\phi(\bar \omega_k)=\sum_{v_i\in O_k} \phi_i(\bar \omega)+\sum_{v_i\in I_k} \phi_i(\bar \omega')+E_k
=\sum_{v_i\in I_k\cup O_k} \phi_i(\bar \omega)+\sum_{v_i\in I_k} \ell(v_i)+E_k,
\end{equation}
for 
\[
|E_k|\leq 2t.
\]
In particular, we have with $r=\phi(\bar\omega')-\phi(\bar\omega)$ in the statement of Lemma \ref{t.discreteint}
\begin{equation}\label{q-6t}
\phi(\bar\omega_k)\leq 
\phi(\bar\omega)+\frac{|I_k|}{m}(\phi(\bar\omega')-\phi(\bar\omega))+4t
\leq\max\big(\phi(\bar\omega),\phi(\bar\omega')\big)+4t\leq q-6t.
\end{equation}

\subsubsection*{Phase $k$}
Inductively, having completed Phase $k-1$ (or, for $k=1$, beginning from $\bar\omega_0=\bar\omega$, we proceed as follows.  

Recall the $X_i$ from \eqref{Xpath} which we use to define our phases, and the associated sets $I_i,O_i,B_i$.  $X_k$ differs from $X_{k-1}$ in a single vertex, in the sense that there is $v_{i_k}\in B_{k-1}$ and an $i_k'=i_k\pm 1$, so that either $i_k'\in O_{k-1}$ and
\begin{equation}\label{addtoI}
B_k=(B_{k}\setminus\{v_{i_k}\})\cup \{v_{i_k'}\}\quad I_k=I_{k-1}\cup \{v_{i_k}\}\quad O_k=O_{k-1}\setminus \{v_{i_k'}\} 
\end{equation}
or else $i_k'\in I_{k-1}$ and
\begin{equation}\label{addtoO}
B_k=(B_{k}\setminus\{v_{i_k}\})\cup \{v_{i_k'}\}\quad I_k= I_{k-1}\setminus \{v_{i_k}\}\quad O_k=O_{k-1}\cup \{v_{i_k'}\}.
\end{equation}
Consider the first case.  Here, we proceed as follows.
\begin{enumerate}[(i)]
\item \label{step.shatter} By \ref{agreesOk}, $\bar \omega_{k-1}$ agrees with $\bar \omega$ on $\Phi_{i_k'}(\bar \omega)$.  One by one, for each color class in $\Phi_{i_k'}(\bar \omega)$, color each of the vertices of the color class with a distinct unused color (this can be done recursively, beginning with the leaves of a spanning tree of the color class, as to not disconnect it) so that all vertices in these color classes become singleton color classes.  This increases the number of colors used by less than $2t$, so we will never use more than $q$ colors.  Call the result $\bar\omega_{k-1}'$.
\item By 
\ref{singleBk} and \eqref{tcover}, after completing Step \ref{step.shatter}, every vertex in $V(\Phi_{i_k}(\bar\omega'))$ is a singleton color class in $\bar\omega_{k-1}'$.  Now, one by one, we merge these singletons (by assigning them identical colors) so that the resulting coloring agrees with $\bar\omega'$ on all color classes in $\Phi_{i_k}(\bar\omega)$.    This step only decreases the number of colors used, so we will not exceed $q$ colors.
\end{enumerate}
The resulting coloring is $\bar\omega_k$, and by construction satisfies \ref{agreesIk}, \ref{agreesOk}, and \ref{singleBk}.  In the second case, the analogous steps are:

\begin{enumerate}[(i)]
\item \label{step2.shatter} By \ref{agreesIk}, $\bar \omega_{k-1}$ agrees with $\bar \omega$ on $\Phi_{i_k'}(\bar \omega')$.  One by one, for each color class in $\Phi_{i_k'}(\bar \omega)$, color each of the vertices of the color class with a distinct unused color, increasing the number of colors used by less than $2t.$  The result $\bar\omega_{k-1}'$.
\item By 
\ref{singleBk}, after completing Step \ref{step.shatter}, every vertex in $V(\Phi_{i_k}(\bar\omega'))$ is a singleton color class in $\bar\omega_{k-1}'$.  Now, one by one, we merge these singletons (by assigning them identical colors) so that the resulting coloring agrees with $\bar\omega'$ on all color classes in $\Phi_{i_k}(\bar\omega)$.
\end{enumerate}

The steps between phases never increase the number of non-null color classes by more than $2t$, and at the boundaries $\bar\omega k$ we have $\phi(\bar\omega_k)\leq q-6t$ as given in \eqref{q-6t}, which followed from our use of Lemma \ref{t.discreteint}.

\subsection{Bounding the congestion}
In the previous section we defined paths $\Gamma_{\omega,\omega'}$ between any states $\omega,\omega'\in \Omega$, with the length $|\Gamma_{\omega,\omega'}|$ satisfying
\[
|\Gamma_{\omega,\omega'}|\leq 2tm^2+20t\leq 2n^2,
\]
as Phase 0 consumes at most $20t$ steps, and each of the $T=\binom{m}{2}$ Phases 1,\dots,$T$ consumes at most $4t$ steps.

Now, letting $E$ denote the graph of transitions of the Markov Chain $\cM$, we aim to bound the congestion
\begin{equation}
B=\max_{e\in E}\,\frac{2|E|}{|\Omega|^2}\sum_{\substack{\omega,\omega'\in \Omega\\\Gamma_{\omega,\omega'\ni e}}}|\Gamma_{\omega,\omega'}|\leq \frac{4n^2|E|}{|\Omega|^2}\cdot \max_{e\in E}\#\set{\{\omega,\omega'\}\mid \Gamma_{\omega,\omega'}\ni e}.
\end{equation}
Since $|E|/|\Omega|$ is polynomial in $n$, we are particularly interested in bounding the ratio
\begin{equation}
    \frac{\max_{e\in E}\#\set{\{\omega,\omega'\}\mid \Gamma_{\omega,\omega'}\ni e}}
    {|\Omega|},
    \end{equation}
which we bound by the vertex-congestion
\begin{equation}
    \frac{\max_{\omega_1\in \Omega}\#\set{\{\omega,\omega'\}\mid \Gamma_{\omega,\omega'}\text{ visits }\omega_1}}
    {|\Omega|}.
    \end{equation}
To this end, fix an $\omega_1\in \Omega$.  By the definition of the canonical paths, if $\omega_1$ is visited by $\Gamma_{\omega,\omega'}$ then
\begin{enumerate}[(A)]
    \item There are $\bar\omega$ and $\bar\omega'$ which can be reached from $\omega$ and $\omega'$, respectively, from a sequence of at most $10t$ applications of the maps $\theta_\kappa$ defined in Phase 0, such that $\phi(\bar\omega),\phi(\bar\omega')\leq q-10t$, and such that:
    \item \label{part.IO} There are sets $I,O\subseteq C_m$ inducing connected subgraphs of $C_m$ with $|I|+|O|=m-3$, such $I$ and $O$ are separated by at least one vertex on each side, and such that $\omega_1$ agrees with $\bar\omega'$ on all color classes in $\Phi_i(\bar\omega')$ for all $v_i\in I$, and with $\bar\omega$ on all color classes in $\Phi_i(\bar\omega)$ for $v_i\in O$.\\(Note that we have $|I|+|O|=m-3$ instead of $|I|+|O|=m-2$ because $\omega_1$ can come from the middle of a phase rather than an endpoint.)
    \item \label{part.fromlem} From our application of Lemma \ref{t.discreteint} in our construction of the canonical paths, these sets satisfy
    \begin{equation*}\label{applemmaI}
\sum_{v_i\in I}(\phi_i(\bar\omega')-\phi_i(\bar\omega))=\frac{\abs{I}}{m}\big(\phi(\bar\omega')-\phi(\bar\omega)\big)\pm E\quad\text{for}\quad |E|\leq 4t,
\end{equation*}
and thus, by subtracting both sides from $\phi(\bar\omega')-\phi(\bar\omega)$, also satisfy
\begin{equation*}\label{applemmaO}
\sum_{v_i\in O}(\phi_i(\bar\omega')-\phi_i(\bar\omega))=\frac{\abs{O}}{m}\big(\phi(\bar\omega')-\phi(\bar\omega)\big)\pm F\quad\text{for}\quad |F|\leq 8t.
\end{equation*}
\item \label{sizeconstraints} We thus have that
\begin{equation}\label{applemmaI1}
\sum_{v_i\in I}\phi_i(\omega_1)-\phi_i(\bar\omega)=\frac{\abs{I}}{m}\big(\phi(\omega_1)-\phi(\bar\omega)\big)\pm E,
\end{equation}
\begin{equation}\label{applemmaO1}
\sum_{v_i\in O}\phi_i(\bar\omega')-\phi_i(\omega_1)=\frac{\abs{O}}{m}\big(\phi(\bar\omega')-\phi(\omega_1)\big)\pm F.
\end{equation}
\end{enumerate}
\bigskip

Our immediate task is to bound the number of choices for $\bar \omega$ and $\bar \omega'$ given $\omega_1$.  There are less than $m^2$ choices for the sets $I$ and $O$ as in \ref{part.IO}.  Then, given $I$ and $O$, all color classes of $\bar\omega'$ in sets $\Phi_i(\bar\omega')$ for $v_i\in I$ appear as color classes of $\omega_1$; all that is unknown is the color classes in $\Phi_i(\bar\omega')$ for $v_i\in C_m\setminus I.$  Similarly, all color classes of $\bar\omega$ in sets $\Phi_i(\bar\omega)$ for $v_i\in O$ appear as color classes of $\omega_1$; all that is unknown is the color classes in $\Phi_i(\bar\omega)$ for $v_i\in C_m\setminus O$.

Thus we will bound the number of choices for $\bar\omega,\bar\omega'$ by bounding the choices for the sets
\begin{equation}\label{thosesets}
    \Phi_i(\bar\omega')\text{ for }v_i\in C_m\setminus I,\quad \Phi_i(\bar\omega)\text{ for }v_i\in C_m\setminus O.
\end{equation}
We will do this by first bounding the choices for the slightly smaller collection of sets
\begin{equation}\label{thesesets}
    \Phi_i(\bar\omega')\text{ for }v_i\in O,\quad \Phi_i(\bar\omega)\text{ for }v_i\in I.
\end{equation}
By \eqref{tcover}, these sets satisfy the property that no $C\in \Phi_i(\bar\omega)$ for $v_i\in I$ can intersect any $C'\in \Phi_j(\bar\omega')$ for $v_j\in O$, and by \eqref{tvcover} that the only vertices not assigned a color class in either collection of sets belong to $V_{b^+}\cup V_{b^++1}\cup V_{b^-}\cup V_{b^-+1}$ where $\{b^+,b^-\}=C_m\setminus I\setminus O$.   Thus, after fixing a choice for the color classes in the sets in \eqref{thesesets}, the result can be extended to a partition of the whole graph $G$ by letting the unassigned vertices of $V_{b^+}\cup V_{b^++1}\cup V_{b^-}\cup V_{b^-+1}$ be singleton classes.  Doing so actually gives a valid $q$-partition of of $G$, since this requires at most $4t$ singleton classes, and by \ref{sizeconstraints}, the total number of color classes in the sets in \eqref{thesesets} must satisfy
\begin{multline}
    \sum_{v_i\in I} \phi_i(\bar \omega)+\sum_{v_i\in O} \phi_i(\bar\omega')
    \leq
    \frac{|I|}{m}\phi(\bar\omega)+\frac{|O|}{m}\phi(\bar\omega')
    \leq \max(\phi(\bar\omega),\phi(\bar\omega'))\\\leq q-10t.
\end{multline}
This implies that the number of choices for the sets in \eqref{thesesets} is at most $|\Omega|$.

$\Delta^n$ gives a crude upper bound on the number of forests in a graph on $n$ vertices with maximum degree $\leq \Delta$.  In turn this lets us bound the number of choices for the four sets
\begin{equation}\label{foursets}
    \Phi_i(\bar\omega')\text{ for }v_i\in C_m\setminus I\setminus O,\quad \Phi_i(\bar\omega)\text{ for }v_i\in C_m\setminus O\setminus I.
\end{equation}
by $\Delta^{2t}$ each, for a total of at most $\Delta^{8t}$ choices.  Thus in total we've bounded the number of choices for $\bar\omega$ and $\bar\omega'$ by 
\[
m^2\Delta^{8t}|\Omega|.
\]
Finally, accounting for Phase 0, the bound in \eqref{thetabound} implies that there at most
\[
(\log n)^{20t}m^2\Delta^{8t}|\Omega|
\]
choices for the states $\omega,\omega'$.

\section{Torpid mixing for $q=2$}
\label{s.torpid}
In this section we give an example of a graph $G$ which is a subgraph of the grid with bounded bandwidth for which the Markov chain $\cM$ is ergodic, but for which the mixing time of $\cM$ is exponential in the number of vertices of $G$.

It is simpler to give an example of torpid mixing for the chain operating on colors rather than partitions, so we do this first.  (In particular, for this example, we will give a bottleneck which separates the set of colorings into two classes, which are equivalent under interchange of colors; thus this is not a bottleneck in the partition chain). For this we let $G=G_{3\times n}$ be a $3\times n$ grid graph, which has bandwidth 3.

\begin{figure}
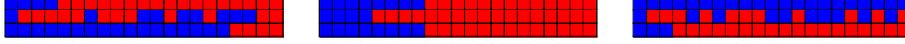

    \centering
\begin{minipage}{.33\linewidth}
\newdimen\omsq  \omsq=5pt
\newdimen\omrule    \omrule=.5pt
\newdimen\omint

\newif\ifvth    \newif\ifhth    \newif\ifomblank
\OMINO{
+-+-+-+-+-+-+-+-+-+-+-+-+-+-+-+-+-+-+-+-+-+\\
|b|b|b|b|r|r|r|r|r|r|r|r|r|r|r|r|r|r|r|r|r|\\
+-+-+-+-+-+-+-+-+-+-+-+-+-+-+-+-+-+-+-+-+-+\\
|b|r|r|r|r|r|b|r|r|r|b|b|r|b|b|r|b|b|b|r|r|\\
+-+-+-+-+-+-+-+-+-+-+-+-+-+-+-+-+-+-+-+-+-+\\
|b|b|b|b|b|b|b|b|b|b|b|b|b|b|b|b|b|r|r|r|r|\\
+-+-+-+-+-+-+-+-+-+-+-+-+-+-+-+-+-+-+-+-+-+\\
}
\end{minipage}%
\begin{minipage}{.33\linewidth}
\newdimen\omsq  \omsq=5pt
\newdimen\omrule    \omrule=.5pt
\newdimen\omint

\newif\ifvth    \newif\ifhth    \newif\ifomblank
\OMINO{
+-+-+-+-+-+-+-+-+-+-+-+-+-+-+-+-+-+-+-+-+-+\\
|b|b|b|b|b|b|b|b|r|r|r|r|r|r|r|r|r|r|r|r|r|\\
+-+-+-+-+-+-+-+-+-+-+-+-+-+-+-+-+-+-+-+-+-+\\
|b|b|b|b|r|r|r|r|r|r|r|r|r|r|r|r|r|r|r|r|r|\\
+-+-+-+-+-+-+-+-+-+-+-+-+-+-+-+-+-+-+-+-+-+\\
|b|b|b|b|b|b|b|b|r|r|r|r|r|r|r|r|r|r|r|r|r|\\
+-+-+-+-+-+-+-+-+-+-+-+-+-+-+-+-+-+-+-+-+-+\\
}
\end{minipage}%
\begin{minipage}{.33\linewidth}
\newdimen\omsq  \omsq=5pt
\newdimen\omrule    \omrule=.5pt
\newdimen\omint

\newif\ifvth    \newif\ifhth    \newif\ifomblank
\OMINO{
+-+-+-+-+-+-+-+-+-+-+-+-+-+-+-+-+-+-+-+-+-+\\
|b|b|b|b|b|b|b|b|b|b|b|b|b|b|b|b|b|b|b|b|b|\\
+-+-+-+-+-+-+-+-+-+-+-+-+-+-+-+-+-+-+-+-+-+\\
|b|r|r|r|b|r|b|r|r|r|b|b|r|b|b|b|r|b|r|b|r|\\
+-+-+-+-+-+-+-+-+-+-+-+-+-+-+-+-+-+-+-+-+-+\\
|b|b|b|r|r|r|r|r|r|r|r|r|r|r|r|r|r|r|r|r|r|\\
+-+-+-+-+-+-+-+-+-+-+-+-+-+-+-+-+-+-+-+-+-+\\
}
\end{minipage}
\caption{\label{fig:3xl} Partitions of $G=G_{3\times \ell}$ (for $\ell=21)$ into two connected color classes.  There are many more colorings which agree on the boundary vertices of $G$ with the first and third shown coloring than with the middle shown coloring, which forms a bottleneck in the chain.  In our discussion of this example, blue is color 1, and red is color 2.}
\end{figure}

When drawn in the plane in the natural way, the outer face of the $k\times \ell$ grid graph $G_{k\times \ell}$ is a $2k+2\ell-4$ cycle which we denote by $B_{k\times \ell}$.  The connectivity of the color classes implies that the intersection of each color class $\omega^{-1}(i)$ with $B_{k\times \ell}$ is a path, which we denote by $P_i=P_i(\omega)$.  In our examples there will be only polynomially many colorings for which one of these paths is empty, thus we focus on the colorings where both paths are nonempty.

We fix a cyclic orientation of the cycle $B_{k\times \ell}$ (when referring to Figure \ref{fig:3xl}, we will use counterclockwise) so that we can refer to the \emph{first} and \emph{last} vertex of each path $P_i$ in a consistent way.

Focusing now on the case $k=3$, let define $S$ to be those states $\omega\in \Omega_{3\times \ell}$ for which coordinate of the first vertex of $P_1(\omega)$ is less than the second coordinate of the last vertex of $P_1(\omega)$.  A representative element of $S$ is illustrated at left in Figure \ref{fig:3xl} (note that we show the subgraph of the dual of the grid corresponding to $G$, and color its faces).

By symmetry we have that $\pi(S)=\frac 1 2(1-\rho)$ where $\rho$ is the probability that the second coordinates of the first and last vertex of $P_i(\omega)$ are actually equal.  In particular, we have that $\pi(S)\approx \frac 1 2$.

The vertex-boundary $\partial(S)$ of $S$ consists of states for which the second coordinates of the first and last vertex of $P_1$ differ by at most 1 (or: where one of the paths is empty: a linear-sized set).  For these states, the intersection of each color class with the `middle set' $\{2\}\times[\ell]$ must also be a path; see Figure \ref{fig:3xl}, middle.  There are only $O(\ell^2)$ such states, while the total number of states in $\Omega$ is greater than $2^\ell$ (when the top and bottom rows are constant with different colors, each of the middle vertices can be colored arbitrarily).  Thus $\pi(\partial S)=O(\frac{\ell^2}{2^\ell})$, and the chain has exponential mixing time.  (It is not hard to extend this argument to show that $G_{4\times \ell}$ has exponential mixing time, although understanding the general $G_{c\times \ell}$ for constant $c$ already seems like a challenge.)

The bottleneck we have identified for the color chain separates the chain into two regions, which are inverted by changing the labels of the two color classes; thus it does not show a bottleneck for the partition chain.

\bigskip 
To show a bottleneck for the partition chain, we define $G_{3\times \ell}^4$, for $\ell$ odd, to be the subgraph of the $\ell\times \ell$ grid graph induced by the set of vertices at graph distance $\leq 1$ from the middle horizontal or middle vertical. Some divisions of this graph (for $\ell=21)$ into two color classes are shown in Figure \ref{fig:^43xl}.  Note that the bandwidth of $G^4_{3\times \ell}$ is at most 12, which can be realized by a $\sigma$ which orders the vertices in, say, increasing distance from the central vertex of $G^4_{3\times \ell}$ (the unique vertex of the graph which is fixed by all of its symmetries).

\begin{figure}
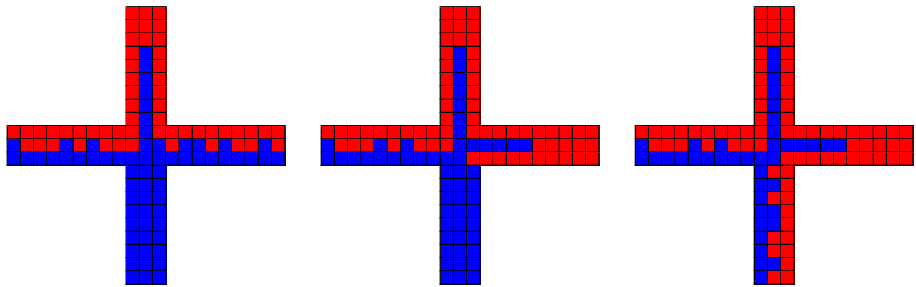

    \centering
\begin{minipage}{.33\linewidth}
\newdimen\omsq  \omsq=5pt
\newdimen\omrule    \omrule=.5pt
\newdimen\omint

\newif\ifvth    \newif\ifhth    \newif\ifomblank
\OMINO{
..................+-+-+-+..................\\
..................|r|r|r|..................\\
..................+-+-+-+..................\\
..................|r|r|r|..................\\
..................+-+-+-+..................\\
..................|r|r|r|..................\\
..................+-+-+-+..................\\
..................|r|b|r|..................\\
..................+-+-+-+..................\\
..................|r|b|r|..................\\
..................+-+-+-+..................\\
..................|r|b|r|..................\\
..................+-+-+-+..................\\
..................|r|b|r|..................\\
..................+-+-+-+..................\\
..................|r|b|r|..................\\
..................+-+-+-+..................\\
..................|r|b|r|..................\\
+-+-+-+-+-+-+-+-+-+-+-+-+-+-+-+-+-+-+-+-+-+\\
|r|r|r|r|r|r|r|r|r|r|b|r|r|r|r|r|r|r|r|r|r|\\
+-+-+-+-+-+-+-+-+-+-+-+-+-+-+-+-+-+-+-+-+-+\\
|b|r|r|r|b|r|b|r|r|r|b|b|r|b|b|r|b|r|r|b|r|\\
+-+-+-+-+-+-+-+-+-+-+-+-+-+-+-+-+-+-+-+-+-+\\
|b|b|b|b|b|b|b|b|b|b|b|b|b|b|b|b|b|b|b|b|b|\\
+-+-+-+-+-+-+-+-+-+-+-+-+-+-+-+-+-+-+-+-+-+\\
..................|b|b|b|..................\\
..................+-+-+-+..................\\
..................|b|b|b|..................\\
..................+-+-+-+..................\\
..................|b|b|b|..................\\
..................+-+-+-+..................\\
..................|b|b|b|..................\\
..................+-+-+-+..................\\
..................|b|b|b|..................\\
..................+-+-+-+..................\\
..................|b|b|b|..................\\
..................+-+-+-+..................\\
..................|b|b|b|..................\\
..................+-+-+-+..................\\
..................|b|b|b|..................\\
..................+-+-+-+..................\\
..................|b|b|b|..................\\
..................+-+-+-+..................\\
}
\end{minipage}%
\begin{minipage}{.33\linewidth}
\newdimen\omsq  \omsq=5pt
\newdimen\omrule    \omrule=.5pt
\newdimen\omint

\newif\ifvth    \newif\ifhth    \newif\ifomblank
\OMINO{
..................+-+-+-+..................\\
..................|r|r|r|..................\\
..................+-+-+-+..................\\
..................|r|r|r|..................\\
..................+-+-+-+..................\\
..................|r|r|r|..................\\
..................+-+-+-+..................\\
..................|r|b|r|..................\\
..................+-+-+-+..................\\
..................|r|b|r|..................\\
..................+-+-+-+..................\\
..................|r|b|r|..................\\
..................+-+-+-+..................\\
..................|r|b|r|..................\\
..................+-+-+-+..................\\
..................|r|b|r|..................\\
..................+-+-+-+..................\\
..................|r|b|r|..................\\
+-+-+-+-+-+-+-+-+-+-+-+-+-+-+-+-+-+-+-+-+-+\\
|r|r|r|r|r|r|r|r|r|r|b|r|r|r|r|r|r|r|r|r|r|\\
+-+-+-+-+-+-+-+-+-+-+-+-+-+-+-+-+-+-+-+-+-+\\
|b|r|r|r|b|r|b|r|r|r|b|b|b|b|b|b|r|r|r|r|r|\\
+-+-+-+-+-+-+-+-+-+-+-+-+-+-+-+-+-+-+-+-+-+\\
|b|b|b|b|b|b|b|b|b|b|b|r|r|r|r|r|r|r|r|r|r|\\
+-+-+-+-+-+-+-+-+-+-+-+-+-+-+-+-+-+-+-+-+-+\\
..................|b|b|b|..................\\
..................+-+-+-+..................\\
..................|b|b|b|..................\\
..................+-+-+-+..................\\
..................|b|b|b|..................\\
..................+-+-+-+..................\\
..................|b|b|b|..................\\
..................+-+-+-+..................\\
..................|b|b|b|..................\\
..................+-+-+-+..................\\
..................|b|b|b|..................\\
..................+-+-+-+..................\\
..................|b|b|b|..................\\
..................+-+-+-+..................\\
..................|b|b|b|..................\\
..................+-+-+-+..................\\
..................|b|b|b|..................\\
..................+-+-+-+..................\\
}
\end{minipage}%
\begin{minipage}{.33\linewidth}
\newdimen\omsq  \omsq=5pt
\newdimen\omrule    \omrule=.5pt
\newdimen\omint

\newif\ifvth    \newif\ifhth    \newif\ifomblank
\OMINO{
..................+-+-+-+..................\\
..................|r|r|r|..................\\
..................+-+-+-+..................\\
..................|r|r|r|..................\\
..................+-+-+-+..................\\
..................|r|r|r|..................\\
..................+-+-+-+..................\\
..................|r|b|r|..................\\
..................+-+-+-+..................\\
..................|r|b|r|..................\\
..................+-+-+-+..................\\
..................|r|b|r|..................\\
..................+-+-+-+..................\\
..................|r|b|r|..................\\
..................+-+-+-+..................\\
..................|r|b|r|..................\\
..................+-+-+-+..................\\
..................|r|b|r|..................\\
+-+-+-+-+-+-+-+-+-+-+-+-+-+-+-+-+-+-+-+-+-+\\
|r|r|r|r|r|r|r|r|r|r|b|r|r|r|r|r|r|r|r|r|r|\\
+-+-+-+-+-+-+-+-+-+-+-+-+-+-+-+-+-+-+-+-+-+\\
|b|r|r|r|b|r|b|r|r|r|b|b|b|b|b|b|r|r|r|r|r|\\
+-+-+-+-+-+-+-+-+-+-+-+-+-+-+-+-+-+-+-+-+-+\\
|b|b|b|b|b|b|b|b|b|b|b|r|r|r|r|r|r|r|r|r|r|\\
+-+-+-+-+-+-+-+-+-+-+-+-+-+-+-+-+-+-+-+-+-+\\
..................|b|r|r|..................\\
..................+-+-+-+..................\\
..................|b|b|r|..................\\
..................+-+-+-+..................\\
..................|b|r|r|..................\\
..................+-+-+-+..................\\
..................|b|b|r|..................\\
..................+-+-+-+..................\\
..................|b|b|r|..................\\
..................+-+-+-+..................\\
..................|b|r|r|..................\\
..................+-+-+-+..................\\
..................|b|r|r|..................\\
..................+-+-+-+..................\\
..................|b|b|r|..................\\
..................+-+-+-+..................\\
..................|b|r|r|..................\\
..................+-+-+-+..................\\
}
\end{minipage}
\caption{Partitions of $G=G^4_{3\times \ell}$ (for $\ell=21)$ into two connected color classes.  There are many more colorings which agree on the boundary vertices of $G$ with the first and third shown coloring than with the middle shown coloring, which forms part of a bottleneck in the chain.}
    \label{fig:^43xl}
\end{figure}

We again consider the outer face of the natural plane drawing of $G$, which is a $4\ell-2$ cycle, which we denote by $B^4_{3\times \ell}$.  This cycle has 4 vertices which have degree 4 in $G$ (the corner vertices of the central $3\times 3$ square of the graph); we denote these vertices by $b_1,b_2,b_3,b_4$.  They divide $B^4_{3\times \ell}$ into four paths; $Q_0$ from $b_4$ to $b_1,$, $Q_1$ from $b_1$ to $b_2$, $Q_2$ from $b_2$ to $b_3$, and $Q_3$ from $b_3$ to $b_4$.

Now for a coloring $\omega$, we let $\beta(\omega)$ be the set of $i \in \{1,2,3,4\}$, for which the interior of the path $Q_i$ includes vertices of both color.  In particular, letting $P_i=P_i(\omega)$ ($i=1,2$) denote the intersection of color class $i$ with $B^4_{3\times \ell}$ (which is necessarily a path), 
\[
\beta(\omega)=\{i\mid \mathring Q_i\cap P_1\neq \varnothing \text{ AND } \mathring Q_i\cap P_2\neq \varnothing\}.
\]
For example, if in Figure 3, $P_1$ is the ``North'' boundary segment, $P_2$ the ``East'' boundary segment, and $P_3$ and $P_4$ ``South'' and ``West'', respectively, then the coloring shown at left belongs has $\beta(\omega)=\{2,4\}$, the middle coloring has $\beta(\omega)=\{2\},$ and the coloring at right has $\beta(\omega)=\{2,3\}$.

\begin{observation}
For all $\omega$, we have $|\beta(\omega)|\leq 2$.
\end{observation}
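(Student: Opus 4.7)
The plan is to exploit the planarity of $G^4_{3\times\ell}$ together with the connectivity of the two color classes to bound the number of color-transition edges on the boundary cycle $B^4_{3\times\ell}$ by $2$, from which the observation follows by a disjointness argument.

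First I would dispose of the degenerate case: if one of $P_1,P_2$ is empty, then the boundary cycle is monochromatic and $\beta(\omega)=\varnothing$, so henceforth assume both are nonempty. The key claim is that each of $P_1$ and $P_2$ is a single contiguous arc of $B^4_{3\times\ell}$. Suppose instead that $P_1$ consists of two or more arcs; then one can select vertices $u_1,v_1,u_2,v_2\in B^4_{3\times\ell}$, appearing in this cyclic order around $B^4_{3\times\ell}$, with $u_1,u_2\in P_1$ and $v_1,v_2\in P_2$. By connectivity of the color classes, there exist paths $\Pi_1\subseteq\omega^{-1}(1)$ from $u_1$ to $u_2$ and $\Pi_2\subseteq\omega^{-1}(2)$ from $v_1$ to $v_2$ in $G^4_{3\times\ell}$, and these paths share no vertices since the color classes are disjoint. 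But $G^4_{3\times\ell}$ is planar with $B^4_{3\times\ell}$ bounding its outer face, so both $\Pi_1$ and $\Pi_2$ lie in the closed disk bounded by $B^4_{3\times\ell}$. The union of $\Pi_1$ with one of the two arcs of $B^4_{3\times\ell}$ from $u_1$ to $u_2$ forms a Jordan curve inside this disk which separates $v_1$ from $v_2$, so any path inside the disk from $v_1$ to $v_2$ must cross $\Pi_1$, contradicting planar vertex-disjointness.

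Once the claim is established, the observation is immediate. There are exactly two color-transition edges on $B^4_{3\times\ell}$, one at each endpoint of the arc $P_1$. For any $i\in\beta(\omega)$, the interior $\mathring Q_i$ is a connected subpath of $B^4_{3\times\ell}$ containing vertices of both colors, so at least one transition edge must have both endpoints in $\mathring Q_i$. Since the interiors $\mathring Q_1,\mathring Q_2,\mathring Q_3,\mathring Q_4$ are pairwise disjoint and only two transition edges exist in total, at most two indices $i$ can satisfy this, giving $|\beta(\omega)|\leq 2$.

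I expect the main obstacle to be the clean application of the Jordan curve argument used to establish the key claim; this is a standard bit of planar topology, but one must be careful to work with the planar embedding of $G^4_{3\times\ell}$ inherited from the natural grid drawing, in which $B^4_{3\times\ell}$ indeed bounds the outer face. Everything after that is a routine count of color transitions versus disjoint interiors of the $Q_i$'s.
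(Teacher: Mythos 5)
Your proof is correct and takes essentially the same approach as the paper: the core counting argument (only two color-transition locations on the boundary cycle, the interiors $\mathring Q_i$ are pairwise disjoint, hence at most two indices $i$ can witness a transition) is exactly the paper's. The only difference is that you re-derive via a Jordan curve argument the fact that $P_1$ and $P_2$ are single contiguous arcs of $B^4_{3\times \ell}$, whereas the paper asserts this earlier in the section and simply invokes it.
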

\begin{proof}
If $i\in \beta(\omega)$, then the interior of $Q_i$ intersects both $P_1$ and $P_2$.  Since each of $P_1$ and $P_2$ has just two endpoints, and the interiors of the $Q_i$ are pairwise disjoint, there can be at most two such $i$.
\end{proof}

The key point now is that for any distinct $i,j\in \{1,2,3,4\}$, there are $\Omega(2^{\ell})$ colorings in $\beta^{-1}(\{i,j\})$; indeed, for the case where half of $P_1$ belongs to one color class and half to the other, we can freely choose the colors of the $\approx \ell/2$ ``middle vertices''---those that are adjacent to two vertices of this path.   Thus the cases where this is true for both $P_1$ and $P_2$ already gives $\Omega(2^\ell)$ members $\omega\in \beta^{-1}(\{i,j\})$.  On the other hand, letting $\beta_0$ be the set of colorings for which one of the paths $P_i$ is empty, the set
\[
\beta^{-1}(i)\cup \beta^{-1}(j)\cup \beta_0
\]
forms a cut set separating $\beta^{-1}(\{i,j\})$ from the rest of the Markov chain.  But this set has size $O(\mathrm{poly}(\ell)2^{\ell/2})$, showing that the mixing time of the chain is exponential in $\ell$.

\bibliographystyle{plain}
\bibliography{main}

\begin{thebibliography}{10}

\bibitem{duke2020multi}
Eric~A Autry, Daniel Carter, Gregory Herschlag, Zach Hunter, and Jonathan~C
  Mattingly.
\newblock Multi-scale merge-split markov chain monte carlo for redistricting.
\newblock {\em arXiv preprint arXiv:2008.08054}, 2020.

\bibitem{duke2021metropolized}
Eric~A Autry, Daniel Carter, Gregory~J Herschlag, Zach Hunter, and Jonathan~C
  Mattingly.
\newblock Metropolized multiscale forest recombination for redistricting.
\newblock {\em Multiscale Modeling \& Simulation}, 19(4):1885--1914, 2021.

\bibitem{graves2017}
Sachet Bangia, Christy~Vaughn Graves, Gregory Herschlag, Han~Sung Kang, Justin
  Luo, Jonathan~C Mattingly, and Robert Ravier.
\newblock Redistricting: Drawing the line.
\newblock {\em arXiv preprint arXiv:1704.03360}, 2017.

\bibitem{characterizingbandwidth}
Julia B{\"o}ttcher, Klaas~P Pruessmann, Anusch Taraz, and Andreas W{\"u}rfl.
\newblock Bandwidth, expansion, treewidth, separators and universality for
  bounded-degree graphs.
\newblock {\em European Journal of Combinatorics}, 31(5):1217--1227, 2010.

\bibitem{duke2019mergesplit}
Daniel Carter, Zach Hunter, Gregory Herschlag, and Jonathan Mattingly.
\newblock A merge-split proposal for reversible monte carlo markov chain
  sampling of redistricting plans.
\newblock 2019.

\bibitem{twopaths}
Maria Chikina, Alan Frieze, Jonathan~C Mattingly, and Wesley Pegden.
\newblock Separating effect from significance in markov chain tests.
\newblock {\em Statistics and Public Policy}, 7(1):101--114, 2020.

\bibitem{outliers}
Maria Chikina, Alan Frieze, and Wesley Pegden.
\newblock Assessing significance in a markov chain without mixing.
\newblock {\em Proceedings of the National Academy of Sciences},
  114(11):2860--2864, 2017.

\bibitem{rectanglebandwidth}
Jarmila Chv{\'a}talov{\'a}.
\newblock Optimal labelling of a product of two paths.
\newblock {\em Discrete Mathematics}, 11(3):249--253, 1975.

\bibitem{recomb}
Daryl DeFord, Moon Duchin, and Justin Solomon.
\newblock Recombination: A family of markov chains for redistricting.
\newblock {\em arXiv preprint arXiv:1911.05725}, 2019.

\bibitem{diaconis1991geometric}
Persi Diaconis and Daniel Stroock.
\newblock Geometric bounds for eigenvalues of markov chains.
\newblock {\em The annals of applied probability}, pages 36--61, 1991.

\bibitem{planarpartition}
Martin~E Dyer and Alan~M Frieze.
\newblock On the complexity of partitioning graphs into connected subgraphs.
\newblock {\em Discrete Applied Mathematics}, 10(2):139--153, 1985.

\bibitem{kosuke2020}
Benjamin Fifield, Michael Higgins, Kosuke Imai, and Alexander Tarr.
\newblock Automated redistricting simulation using markov chain monte carlo.
\newblock {\em Journal of Computational and Graphical Statistics},
  29(4):715--728, 2020.

\bibitem{mountain89}
Jacob~E Goodman, J{\'a}nos Pach, and Chee~K Yap.
\newblock Mountain climbing, ladder moving, and the ring-width of a polygon.
\newblock {\em The American Mathematical Monthly}, 96(6):494--510, 1989.

\bibitem{herschlag2020quantifying}
Gregory Herschlag, Han~Sung Kang, Justin Luo, Christy~Vaughn Graves, Sachet
  Bangia, Robert Ravier, and Jonathan~C Mattingly.
\newblock Quantifying gerrymandering in north carolina.
\newblock {\em Statistics and Public Policy}, 7(1):30--38, 2020.

\bibitem{mountain52}
Tatsuo Homma.
\newblock A theorem on continuous functions.
\newblock In {\em Kodai Mathematical Seminar Reports}, volume~4, pages 13--16.
  Department of Mathematics, Tokyo Institute of Technology, 1952.

\bibitem{jerrum1989approximating}
Mark Jerrum and Alistair Sinclair.
\newblock Approximating the permanent.
\newblock {\em SIAM journal on computing}, 18(6):1149--1178, 1989.

\bibitem{jerrummonomer}
Mark Jerrum, Alistair Sinclair, and Eric Vigoda.
\newblock A polynomial-time approximation algorithm for the permanent of a
  matrix with nonnegative entries.
\newblock {\em Journal of the ACM (JACM)}, 51(4):671--697, 2004.

\bibitem{danamonomer}
Claire Kenyon, Dana Randall, and Alistair Sinclair.
\newblock Approximating the number of monomer-dimer coverings of a lattice.
\newblock {\em Journal of Statistical Physics}, 83(3):637--659, 1996.

\bibitem{najt2019complexity}
Lorenzo Najt, Daryl DeFord, and Justin Solomon.
\newblock Complexity and geometry of sampling connected graph partitions.
\newblock {\em arXiv preprint arXiv:1908.08881}, 2019.

\bibitem{sinclair1992improved}
Alistair Sinclair.
\newblock Improved bounds for mixing rates of markov chains and multicommodity
  flow.
\newblock {\em Combinatorics, probability and Computing}, 1(4):351--370, 1992.

\end{thebibliography}

\end{document}

\bigskip
\newcommand{\neswarrow}{\mathrel{\text{$\nearrow$\llap{$\swarrow$}}}}
\newcommand{\nwsearrow}{\mathrel{\text{$\nwarrow$\llap{$\searrow$}}}}

We end with discussion of sampling $2$-partitions in the case where $G$ is the $\ell\times \ell$ grid on $n=\ell^2$ vertices.  If we ignore for simplicity the case of partitions in which one class fails to intersect the boundary of the grid, this is equivalent to sampling self-avoiding walks which begin and end at the boundary of the $(\ell+1)\times (\ell+1)$ grid.  Consider now the four classes of paths
\[
\cP_{\neswarrow},\:\cP_{\updownarrow},\:\cP_{\nwsearrow},\:\cP_{\leftrightarrow}
\]

    \vx{6,\i}{a\i}%
    \vx{7,\i}{b\i}%
    \vx{8,\i}{c\i}%
    \edg{a\i}{b\i}
    \edg{b\i}{c\i}
    \vx{\i,6}{A\i}%
    \vx{\i,7}{B\i}%
    \vx{\i,8}{C\i}%
    \edg{A\i}{B\i}
    \edg{B\i}{C\i}

\begin{minipage}{.5\linewidth}
\newdimen\omsq  \omsq=6pt
\newdimen\omrule    \omrule=.5pt
\newdimen\omint

\newif\ifvth    \newif\ifhth    \newif\ifomblank
\OMINO{
..................+-+-+-+..................\\
..................|.|.|.|..................\\
..................+-+-+-+..................\\
..................|.|.|.|..................\\
..................+-+-+-+..................\\
..................|.|.|.|..................\\
..................+-+-+-+..................\\
..................|.|.|.|..................\\
..................+-+-+-+..................\\
..................|.|.|.|..................\\
..................+-+-+-+..................\\
..................|.|.|.|..................\\
..................+-+-+-+..................\\
..................|.|.|.|..................\\
..................+-+-+-+..................\\
..................|.|.|.|..................\\
..................+-+-+-+..................\\
..................|.|.|.|..................\\
+-+-+-+-+-+-+-+-+-+-+-+-+-+-+-+-+-+-+-+-+-+\\
|.|.|.|.|.|.|.|.|.|.|.|.|.|.|.|.|.|.|.|.|.|\\
+-+-+-+-+-+-+-+-+-+-+-+-+-+-+-+-+-+-+-+-+-+\\
|.|.|.|.|.|.|.|.|.|.|.|.|.|.|.|.|.|.|.|.|.|\\
+-+-+-+-+-+-+-+-+-+-+-+-+-+-+-+-+-+-+-+-+-+\\
|.|.|.|.|.|.|.|.|.|.|.|.|.|.|.|.|.|.|.|.|.|\\
+-+-+-+-+-+-+-+-+-+-+-+-+-+-+-+-+-+-+-+-+-+\\
..................|.|.|.|..................\\
..................+-+-+-+..................\\
..................|.|.|.|..................\\
..................+-+-+-+..................\\
..................|.|.|.|..................\\
..................+-+-+-+..................\\
..................|.|.|.|..................\\
..................+-+-+-+..................\\
..................|.|.|.|..................\\
..................+-+-+-+..................\\
..................|.|.|.|..................\\
..................+-+-+-+..................\\
..................|.|.|.|..................\\
..................+-+-+-+..................\\
..................|.|.|.|..................\\
..................+-+-+-+..................\\
..................|.|.|.|..................\\
..................+-+-+-+..................\\
}
\end{minipage}%